\begin{document}
\maketitle

\begin{abst}
Given a function in the Hardy space of inner harmonic gradients on the sphere, $H_+(\SS)$, we consider the problem of finding a corresponding function in the Hardy space of outer harmonic gradients on the sphere, $H_-(\SS)$, such that the sum of both functions differs from a locally supported vector field only by a tangential divergence-free contribution. We characterize the subspace of $H_+(\SS)$ that allows such a continuation and show that it is dense but not closed within $H_+(\SS)$. Furthermore, we derive the linear mapping that maps a vector field from this subspace of $H_+(\SS)$ to the corresponding unique vector field in $H_-(\SS)$. The explicit construction uses layer potentials but involves unbounded operators. We indicate some bounded extremal problems supporting a possible numerical evaluation of this mapping between the Hardy components. The original motivation to study this problem comes from an inverse magnetization problem with localization constraints.
\end{abst}

\begin{key}
	Hardy spaces, Constrained approximation, Vector field decomposition, Locally divergence-free fields, Spatially localized fields
\end{key}

\section{Introduction}\label{sec:intro}
The space of square-integrable vector fields on a sphere \( \SS \) can be written as a direct sum
\begin{align}\label{eq:hardy-hodge}
\glts = \Hp \oplus \Hm \oplus \Divfree,
\end{align}
where \( \Divfree \) is the space of divergence-free vector fields, and \( \Hp, \Hm \) are the (spherical) Hardy spaces (see below for more details).
For a generic \( \f \in \glts \) its Hardy components \( \f_{+} \in \Hp \) and \( \f_{-} \in \Hm\) are independent, implying that either one of them is not enough to reconstruct the other without knowing the entire field \( \f \).
However, if \( \f \) is localized, meaning that it vanishes on an open domain of \( \SS \), we have the following theorem.
\begin{thm}[{\cite[Cor. 2.6]{gerhards20}}]\label{thm:old}
	If  \( \f \in \glts \) is localized
	then one Hardy component of \( \f \) uniquely determines the other.
\end{thm}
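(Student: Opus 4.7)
The plan is to apply elliptic unique continuation to an auxiliary harmonic function built from $\f$. By linearity of the decomposition~\eqref{eq:hardy-hodge}, it suffices to establish the linear statement: if $\f\in\glts$ vanishes on some nonempty open $U\subset\SS$ and $\f_+=0$, then $\f_-=0$. Writing $\f = \f_- + \f_D$ with $\f_- = \nabla U_-|_\SS$ for $U_-$ harmonic in $\Omega_-:=\mathbb{R}^3\setminus\overline{B}$ and decaying at infinity, and $\f_D$ tangential and surface divergence-free, I would introduce the auxiliary scalar
\[
V(x) := \int_\SS \f(y) \cdot \nabla_y \frac{1}{4\pi|x-y|}\, d\sigma(y),\qquad x \in \mathbb{R}^3\setminus\SS,
\]
which is harmonic on its domain of definition.

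Splitting $\f = f_\nu \nu + \f_T$ and integrating by parts tangentially on the closed surface $\SS$ gives the representation $V = D[f_\nu] - S[\mathrm{div}_S \f_T]$, where $D$ and $S$ denote the classical double and single layer potentials on $\SS$. The standard jump relations are $[V]_\SS = f_\nu$ and $[\partial_\nu V]_\SS = \mathrm{div}_S \f_T$, and both vanish identically on $U$ since $\f$ does. Hence $V$ and $\partial_\nu V$ are continuous across $U$, and $V$ extends to a harmonic function on the connected open set $\Omega := \mathbb{R}^3\setminus(\SS\setminus U)$. In parallel one verifies $V|_{\Omega_-}\equiv 0$: the divergence-free contribution vanishes after tangential integration by parts using $\mathrm{div}_S \f_D = 0$, the $\Hp$-contribution vanishes by hypothesis, and for the $\Hm$-contribution the spherical-harmonic computation with $U_- = \sum_{l,m} a_{l,m}\, r^{-l-1} Y_l^m$ yields $V|_{\Omega_-}\equiv 0$ together with the explicit formula $V|_B = \sum_{l,m} (l+1)\, a_{l,m}\, r^l Y_l^m$.

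By real-analyticity of harmonic functions on the connected open domain $\Omega$, the vanishing of $V$ on $\Omega_-$ propagates to all of $\Omega$; in particular $V|_B \equiv 0$. The displayed formula then forces $a_{l,m}=0$ for every $l,m$, whence $U_-\equiv 0$ in $\Omega_-$ and $\f_- = \nabla U_- = 0$. Conceptually, the hypothesis $\f|_U = 0$ on an \emph{open} subset of $\SS$ upgrades the surface-localization into a genuine three-dimensional harmonic extension, to which elliptic unique continuation applies; the main technical obstacle is the verification of the two potential-theoretic identities $V_-|_{\Omega_-}\equiv 0$ and the injectivity of the map $U_-\mapsto V_-|_B$, which can be handled either via the spherical harmonic expansion above or via Green's representation formulas for $U_-$ in $\Omega_-$.
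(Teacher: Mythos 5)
Your argument is correct, and it takes a genuinely different route from the one this paper develops. Note first that the paper never proves Theorem~\ref{thm:old} itself --- it quotes it from \cite{gerhards20} --- and instead re-derives a constructive strengthening in Section~\ref{sec:relating}: localization is rephrased as local divergence-freeness of \( \f_++\f_- \) (Proposition~\ref{prop: equivalence}), the boundary data yield \( \proj\varphi=\proj(K+\tfrac12\id)(\varphi+\phi) \) with \( \varphi+\phi\in\divfreezeroset \), and uniqueness is reduced to injectivity of \( \proj(K\pm\tfrac12\id) \) on the \( S \)-harmonic subspace (Theorem~\ref{prop:dense1}), proved by unique continuation on the cone over \( \zeroset \). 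You instead give the classical ``silent source'' argument: form the scalar potential \( V \) of the surface field, observe that it extends harmonically through the window \( U \) (here the cleanest justification is that \( \Delta V \) is a distribution supported on \( \operatorname{supp}\f\subseteq\SS\setminus U \); for \( L^2 \) densities the pointwise jump relations require a little care, whereas the support statement is immediate), check that the \( \Hm \)-part is exterior-silent yet injectively visible from inside, and conclude by real-analyticity on the connected set \( \R^{3}\setminus(\SS\setminus U) \). Your coefficients are right: with \( U_-=\sum a_{l,m}r^{-l-1}Y_l^m \) one gets \( V\equiv 0 \) outside and \( V=\sum(l+1)a_{l,m}r^{l}Y_l^m \) inside, so \( a_{l,m}=0 \) follows. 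What each approach buys: yours is shorter, self-contained, and makes transparent that the theorem is a unique-continuation statement; the paper's operator-theoretic route is heavier but delivers exactly what Theorem~\ref{thm:old} lacks, namely a characterization of which components admit a localized completion (the space \( B_+(\uniqueplus) \)) and an explicit, albeit unbounded, map \( \ptm \) realizing \( \f_+\mapsto\f_- \). Two small points to tidy up: you only treat the direction \( \f_+=0\Rightarrow\f_-=0 \); the reverse direction is symmetric (the interior trace of \( V \) then vanishes while the exterior trace is \( \sum l\,b_{l,m}r^{-l-1}Y_l^m \), which kills every mode except \( l=0 \), and that mode contributes nothing to a gradient). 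Also the paper works in \( \R^{d} \), \( d\ge 3 \); your computation carries over with \( r^{-l-1} \) replaced by \( r^{-l-d+2} \) and the factor \( l+1 \) by \( l+d-2 \), both still nonzero.
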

This result is useful for the following type of inverse magnetization problems:
Consider a magnetic sphere and use \( \f \) to describe its magnetization.
It is known that  outside the sphere only the Hardy component
$\f_+$ generates a measurable magnetic field, whereas the Hardy component \( \f_{-} \) is invisible (e.g., \cite{gerhards16a,gubbins11}). Consequently, it is impossible to determine the magnetization of the sphere by measuring only its outer magnetic field.
Nevertheless, if the magnetization is a priori known to vanish on some open subset of \( \SS \), Theorem \ref{thm:old} implies that $\f_+$ and $\f_-$ are both determined by the magnetic field outside the sphere and thus the entire magnetization \( \f \) can be recovered (up to a divergence-free part which is never visible).
This particular setup appears in geomagnetism, where one typically models the Earth's crust as a magnetized sphere and measures its outer magnetic field on a satellite orbit (for more details
 see e.g., \cite{baratchartgerhards16,baratchart13,gerhards16a,gerhards16b}).

Devoid of the geomagnetic context, Theorem \ref{thm:old} states that the Hardy components of a localized vector field are dependent, but its proof in \cite{gerhards20} is not constructive and it does not state how to obtain one component from the other.
Furthermore, to relate the Hardy components we need to know a priori that the given \( \f_{+} \) belongs to a localized vector field $\f$. Nevertheless, Theorem \ref{thm:old} does not characterize Hardy components that appear from such localized vector fields. Thus, when looking only at a single Hardy component, we lack the means to verify whether the assumption of the theorem can be satisfied.

In this paper we close both of the above shortcomings.
In Section \ref{sec:relating}, we rephrase the problem into a more convenient setting in terms of locally divergence-free vector fields. Subsequently, we characterize the set of Hardy components that belong to strictly localized fields and provide  explicit linear mappings to reconstruct one component from the other. Nevertheless, these mappings are unbounded
 and thus difficult to evaluate numerically. For that reason, in Section \ref{sec:analofop} we suggest several bounded extremal problems to approximate the reconstruction procedure.

\paragraph{Notation.}

By \( \BB \) we will denote the open unit ball in the Euclidean space \( \R^{d} \) (\( d\geq 3 \)). Its boundary, the unit sphere, will be denoted by \( \SS \).
The space \( \lts \) will be the space of square-integrable (with respect to the surface measure) functions on \( \SS \), with the canonical scalar product \( \scalp{f, g} \) and the norm \( \| f \| = \scalp{f,f}^{\nicefrac{1}{2}} \) for \( f,g \in \lts \). For an open set \( \zeroset \) in \( \SS \) we will consider \( L^{2}(\zeroset) \) as a subset of \( \lts \) of functions with essential support in \( \zeroset \).
If \( H \) is a subspace of \( \lts \), we will write \( H / \langle 1 \rangle \) to denote the subspace of functions in \( H \) that satisfy \( \scalp{f,1} =0 \) for \( f \in H \), i.e.,  functions in \( H \) with zero mean.
If \( A \) is a linear operator on \( \lts \) then \( \dom{A} \) will denote its domain and for \( H \subseteq \dom{A} \) we will write \(A\big( H \big) =  \{ Af \colon \ f \in H \}  \) for the image of \( H \) under \( A \), and \( A\vert_{H} \) for the operator \( A \) restricted to the subspace \( H \).
The space \( \glts \) is identified with the tensor product \( \lts \otimes \R^{d} \), referring to the space of square integrable vector fields on \( \SS \). For \( \f,\g \in \glts \), we will denote their scalar product by \( \langle \f,\g \rangle \) and the norm by \( \| \f \|=\langle \f,\f \rangle^{\nicefrac{1}{2}} \).
The sphere \( \SS \) admits an outward-pointing unit normal vector field \( \n \). This field is continuous, so that for every \( \f \in \glts \) the (Euclidean) scalar product \( \n \cdot \f \) defines a function in \( \lts \) which we call the normal of \( \f \).
If \( \f \in \glts \) has a vanishing normal, we call \( \f \) a tangent vector field. If \( \f \) is a tangent vector field, we will write \( \divs(\f) \) to denote the surface divergence of \( \f \) in the distributional sense.
The space \( \Divfree \) will be the space of divergence-free vector fields, i.e., the space of vector fields \( \f \in \glts \) that have vanishing normal and whose surface divergence \( \divs(\f) \) vanishes as a distribution on the entire sphere.
If \( \zeroset \) is an open subset of \( \SS \), we will write %
\( \divs(\f)\vert_{\zeroset} \) to denote the distribution on \( \zeroset \) that results from restricting the distribution \( \divs(\f) \) to this subset (no confusion with the operator restriction shall arise since the difference will be clear from the context).
The space \( \sob \) will refer to the Sobolev space of functions \( f \) in \( \lts \) whose surface gradient \( \grads f \) is in \( \glts \); equipped with the graph norm \( \| f \|_{\sob} = \| f \| + \| \grads f \|_{\glts} \), the space \( \sob \) is itself a Hilbert space. Furthermore, if \( \zeroset \) is an open subset of \( \SS \), then \( \cc(Z) \) will be the space of smooth functions on \( \SS \) with compact support in \( \zeroset \). The closure of \( \cc(\zeroset) \) in the Sobolev norm \( \| \cdot \|_{\sob} \) is the Sobolev space \( \sobzero \) which is a closed subset of \( \sob \).

\paragraph{Layer potentials and Hardy spaces.}
For \( f \in \lts \) we write its single layer potential as
\begin{align}
    \mathcal{S}f (x)
	= \frac{-1}{\omega_d (d-2)} \
	 \int_{\SS} \frac{f(y)}{| x - y |^{d-2}}  \ d\omega(y)
	\qquad \big( x \in \R^{d} \big),
\end{align}
where \( \omega \) is the surface measure on \( \SS \), the constant $\omega_d=\omega(\SS)$ is the surface area of the sphere in $\R^d$, and \( | \cdot | \) is the Euclidean vector norm. The function \( \mathcal{S}f \) is continuous on the entire \( \R^{d} \), harmonic on \( \BB \), and harmonic on \( \R^{d} \setminus \overline{\BB} \) while vanishing at infinity.
The trace of \( \mathcal{S}f \) to the sphere, \( \trace(\mathcal{S}f) \), defines the (boundary) single layer potential $S \colon \lts \to \sob$ via
\begin{equation*}
Sf(x)
=\frac{-1}{\omega_d(d-2)}\int_{\SS} \frac{f(y)}{|x-y|^{d-2}} \ d \omega(y) = \trace(\mathcal{S}f)(x)
\qquad \big( x\in  \SS \big).
\end{equation*}
The operator \( S \) is invertible, and (on the sphere) it preserves constant functions, meaning that \( Sf \) is a constant function if and only if \( f \) is constant. By definition, every function \( f \) in the Sobolev space \( \sob \) has a surface gradient on the sphere \( \grads f \) that is in \( \glts \). Therefore, the mapping \( \grads S \) from \( \lts \) to \( \glts \) defines a bounded linear operator, whose adjoint \( (\grads S)^{\ast} \) is a bounded linear operator from \( \glts \) to \( \lts \).

\begin{rem}\label{rem:S*div}
If \( \dsob \) denotes the dual space of the Sobolev space \( \sob \), then we can define the Banach space adjoint of \( S \) as the mapping \( S^{*} \colon \dsob \to \lts \). In this case, we can identify \( (\grads S)^{\ast} \) with the mapping
\begin{align}
S^{*} \circ \divs \colon \glts \to \dsob \to \lts.
\end{align}
However, for the following analysis it will be more convenient to stay in \( \lts \) and to not switch between Sobolev spaces. For this reason we will use the operators \( (\grads S)^{\ast} \) instead of \( S^{*} \circ \divs \), even though the latter may be more familiar to some readers.
\end{rem}

The normal derivative trace \( \partial_{\n^+ }\mathcal{S}f \) and \( \partial_{\n^- }\mathcal{S}f \) to the sphere from the inside and the outside, respectively,  define the operators
\begin{align}\label{eq:double layer}
	K - \tfrac{1}{2} \id \colon \lts / \langle 1 \rangle \to \lts / \langle 1 \rangle,
	&&
	K + \tfrac{1}{2} \id \colon \lts \to \lts,
\end{align}
respectively,
where \( \id \) is the identity operator and \( K \) is the singular double layer potential
\begin{equation*}
Kf(x)=
\textnormal{p.v.}\,
\frac{-1}{\omega_d} \
\int_{\SS}
\frac{
	\n(y)\cdot (x-y)
 }{%
	|x-y|^{3}
}
f(y) \ d \omega(y) \qquad \big(x\in  \SS\big).
\end{equation*}
Both operators in \eqref{eq:double layer} are invertible and self-adjoint (on the sphere). The operator \( K + \tfrac{1}{2} \id \) preserves constant functions, while the operator \( K - \tfrac{1}{2}I \), extended to act on constant functions, annihilates them. For more information on the layer potentials we refer e.g., to  \cite{fabmenmit98,ver84} and references therein.

\paragraph{Hardy spaces and the Hardy-Hodge decomposition.}
Using the single and the double layer potentials, we now define the required Hardy spaces. Our exposition of Hardy spaces is slightly unconventional but it serves the purpose to introduce them without additional analytical tools that are necessary for the original and conventional definition (e.g., \cite{stein1960} and Appendix \ref{sec:hardyspace}). We define the Hardy spaces \( \Hp \) and \( \Hm \) as the range of the operators
\begin{equation}\label{cross_s}
	\begin{aligned}
    B_{+} \colon \lts / \langle 1 \rangle &\to \glts
	&
	B_{-} \colon \lts &\to \glts \\
	f &\mapsto \n \ \big(K-\tfrac{1}{2} \id \big)f+\grads S f
	&
	f &\mapsto \n \ \big(K+\tfrac{1}{2} I\big)f+\grads S f.
\end{aligned}
\end{equation}
Both operators in \eqref{cross_s} are bounded, have closed range in \( \glts \), and on that range they are bijective and thus (boundedly) invertible. The range of \( B_{+} \) defines the Hardy space \( \Hp \) of non-tangential limits of harmonic gradients from within the ball \( \BB \) to the sphere, while the range of \( B_{-} \) defines the Hardy space \( \Hm \) of non-tangential limits of harmonic gradients from outside the ball \( \BB \) to the sphere.

The Hardy-Hodge decomposition of the space \( \glts \) is the orthogonal direct sum \eqref{eq:hardy-hodge}.
Thus, every vector field \( \f \in \glts \) can be uniquely written as
\(    \f = B_{+}\varphi + B_{-} \phi + \f_{df}
\) for some unique \( \varphi, \phi \in \lts \) with \( \scalp{\varphi, 1} = 0 \). For brevity, we will sometimes write \( \f_{+} \) for \( B_{+}\varphi \) and \( \f_{-} \) for \( B_{-}\phi \), and refer to \( \f_{+} \) and \( \f_{-} \) as to the Hardy components of \( \f \).
Since the Hardy-Hodge decomposition is orthogonal, it follows that for every \( \varphi, \phi \in \lts \) we have
\begin{align}
    \scalp{B_{+} \phi, B_{-}\varphi}_{\glts} =0.
\end{align}
By inserting the definition of the operators \( B_{+} \) and \( B_{-} \) in this equation and using the adjoint of \( \grads S \) we get the operator identity
\begin{align}\label{eq:single-double layer}
    (\grads S)^{\ast} \grads S
	= - \big( K + \tfrac{1}{2}I \big) \big( K - \tfrac{1}{2}I \big).
\end{align}
We will use this relation several times in the following analysis. It is worth to point out that this relation holds only on a sphere, since on other surfaces the Hardy-spaces (if defined) are not orthogonal.
Further details regarding Hardy spaces and the Hardy-Hodge decomposition can be found, e.g., in \cite{atfeh10,bpt20,bargerkeg20,baratchart13,gerhards16a}, which also consider more general surfaces than the sphere.

\subsection{Localized and locally divergence-free vector fields}

Theorem \ref{thm:old} discusses localized fields -- fields that vanish on an open set of \( \SS \). For the further analysis, however, it appears convenient to rephrase this condition solely in terms of Hardy components of a field. We observe that writing \( \f \in \glts \) in the Hardy-Hodge decomposition and assuming that $\f$ vanishes on the open set \( \zeroset\) yields \( \f_{+} + \f_{-} = - \f_{df} \) on \( \zeroset \). In other words, if a vector field vanishes on an open domain of a sphere, then on this domain the sum of its Hardy components must have a vanishing normal and it must be divergence-free (in the distributional sense). We call such fields locally divergence-free on \( \zeroset \).

The converse statement holds in the following form: if \( \f \) is a locally divergence-free field on \( \zeroset \), then for every open set \( O \subset \zeroset \) with a sufficiently regular boundary in the sense the boundary separate \( \SS \) into two Lipschitz domains, there exists a \( \g \in \Divfree \) that is divergence-free  on the entire sphere  such that \( \f + \g \) vanishes on \( O \). This statement follows from the fact that if the boundary of \( O \) satisfies the above conditions, then we can always extend a locally divergence-free field to a divergence-free field on the entire sphere (for the convenience of the reader we include the necessary extension result and its proof in the Appendix \ref{sec:cont}).
Since \( \Divfree \) is orthogonal to the Hardy spaces, adding an appropriate \( \g \in \Divfree\) to \( \f \) leaves the Hardy components of \( \f \) unchanged. Thus, looking only at Hardy components of a vector field, it makes no difference whether we assume the field \( \f \) to vanish on an open set or to be locally divergence-free. We summarize the above discussion in the following proposition.

\begin{prop}\label{prop: equivalence}
Let \( \zeroset \subset \SS \) be a Lipschitz domain with connected boundary. If \( \f \in \glts \) vanishes on \( \zeroset \), then the sum of its Hardy-components \( \f_{+} + \f_{-} \) is locally divergence-free on \( \zeroset \). Conversely, if for given \( \f_{+} \in \Hp \) and \( \f_{-} \in \Hm \) the sum \( \f_{+} + \f_{-} \) is locally divergence-free on \( \zeroset \), then there exists a field \( \f \in \glts \) that vanishes on \( \zeroset \) and has Hardy-components \( \f_{+} \) and \( \f_{-} \).
\end{prop}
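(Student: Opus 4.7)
The plan is to handle the two implications of the proposition separately; both rely on the orthogonality of the Hardy-Hodge decomposition, with the only substantive work concentrated in the converse direction.

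For the forward implication, I would start from the Hardy-Hodge decomposition $\f = \f_{+} + \f_{-} + \f_{df}$. The hypothesis $\f = 0$ on $\zeroset$ rewrites as $\f_{+} + \f_{-} = -\f_{df}$ almost everywhere on $\zeroset$. Since $\f_{df} \in \Divfree$ has vanishing normal on all of $\SS$, the same is true for $\f_{+} + \f_{-}$ on $\zeroset$. To show $\divs(\f_{+} + \f_{-})\vert_{\zeroset} = 0$ as a distribution, I would pair against a test function $\varphi \in \cc(\zeroset)$: integration by parts combined with the pointwise identity on the support of $\varphi$ gives $\langle \divs(\f_{+} + \f_{-}), \varphi \rangle = -\langle \f_{+} + \f_{-}, \grads \varphi\rangle_{\glts} = \langle \f_{df}, \grads \varphi\rangle_{\glts} = -\langle \divs(\f_{df}), \varphi \rangle = 0$, where the last equality uses that $\f_{df}$ is divergence-free on the entire sphere. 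This verifies the definition of locally divergence-free on $\zeroset$.

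For the converse, suppose $\f_{+} \in \Hp$ and $\f_{-} \in \Hm$ are given with $\f_{+} + \f_{-}$ locally divergence-free on $\zeroset$. I aim to produce $\g \in \Divfree$ such that $\f := \f_{+} + \f_{-} + \g$ vanishes on $\zeroset$; because $\Divfree$ is orthogonal to both Hardy spaces, such an $\f$ will automatically have $\f_{+}$ and $\f_{-}$ as its Hardy components. The required $\g$ must coincide with $-(\f_{+} + \f_{-})$ on $\zeroset$, and by hypothesis this restriction is a tangent field on $\zeroset$ with vanishing distributional surface divergence there. What remains is to extend it from $\zeroset$ to a tangent field on all of $\SS$ whose distributional divergence vanishes on the entire sphere.

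This extension is the only nontrivial ingredient and is exactly the result the authors defer to Appendix \ref{sec:cont}; it is also the main obstacle in the argument. The Lipschitz regularity of $\zeroset$ together with the connectedness of $\partial\zeroset$ ensures that $\SS \setminus \overline{\zeroset}$ is likewise a Lipschitz domain, which is the geometric hypothesis typically required to realize a divergence-free continuation across $\partial\zeroset$ by solving an auxiliary problem on the complementary domain. Once such a $\g$ is produced, setting $\f := \f_{+} + \f_{-} + \g$ completes the proof, the verification of the prescribed Hardy components being automatic from orthogonality.
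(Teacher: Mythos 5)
Your proposal is correct and follows essentially the same route as the paper: the forward direction reads off $\f_{+}+\f_{-}=-\f_{df}$ on $\zeroset$ from the Hardy--Hodge decomposition, and the converse constructs $\g\in\Divfree$ by extending the locally divergence-free restriction to a globally divergence-free field (the paper's Lemma in Appendix \ref{sec:cont}) and uses orthogonality of $\Divfree$ to the Hardy spaces to conclude. You also correctly isolate the extension across $\partial\zeroset$ as the only nontrivial ingredient, which is exactly where the Lipschitz and connected-boundary hypotheses enter in the paper.
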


If \( \zeroset \) is open in \( \SS \), we can always choose a subset \( O \) of \( \zeroset \)  with a sufficiently regular boundary to satisfy the requirement of the proposition above. Thus, with the aid of  Proposition \ref{prop: equivalence} our initial goal of finding a relation between the Hardy components of strictly localized vector fields becomes equivalent to finding a relation between Hardy components \( \f_{+} \) and \( \f_{-} \) under the assumption that their sum is locally divergence-free on \( \zeroset \). This is what we investigate in the remaining part of the paper.

\section{Hardy components of locally divergence-free vector fields}\label{sec:relating}

Our goal of this section is to prove Theorem \ref{thm:main}, which defines an ``if and only if'' condition for a vector field to be locally divergence-free on an open subset of a sphere.
To this aim, the following space will be pivotal:
 for an open set \( \zeroset \) in \( \SS \) we define
\begin{align}
    \divfreezeroset
	= \big\{
    f \in \lts \colon \divs (\grads S f)\vert_{\zeroset} = 0
    \text{ as distribution on } \zeroset
    \big\}.
\end{align}
This space is non-empty as it contains constant functions: if \( c \) is a constant function in \( \lts \), then \( Sc \) is again constant and \( \grads S c = 0 \). Moreover, \( \divfreezeroset \) is closed in \( \lts \). To see this consider a Cauchy sequence  \( ( f_{n})_{n\in\bbN}  \) in \( \divfreezeroset \) that converges to \( f \in \lts \). Then for
  \( \varphi \in \cc(\zeroset)\), it holds
\begin{align}
	\label{eq: closed space}
    \big| \divs(\grads Sf) (\varphi) \big|
    = \big| \scalp{\grads Sf , \grads \varphi} \big|
	= \big| \scalp{\grads S(f-f_{n}) , \grads \varphi} \big|
    \leq C \big\| f - f_{n} \big\|,
\end{align}
where \( C \) is a finite constant that depends on \( \varphi \). The right-hand side of
\eqref{eq: closed space} goes to zero as \( n \) grows to infinity,
implying that \( f \) is in \( \divfreezeroset \) and that \( \divfreezeroset \) is a closed  subspace of \( \lts \).

The space \( \divfreezeroset \) defines functions
that we call \textit{\( S \)-harmonic} on \( \zeroset \).
If \( f \) is in \( \lts \), then its surface gradient is naturally a distribution and thus not necessarily square integrable.
The function \( S f \), on the other hand, is in the Sobolev space \( \sob \), and \( \grads S f \) is always a square integrable field in \( \glts \).
Moreover, since \( S \) is one-to-one and onto \( \sob \), we can unambiguously define \( \grads S f \) as an \textit{\( L^2 \)-surface gradient} of \( f \). Thus, \( \divfreezeroset \) defines functions with a divergence-free \( L^{2} \)-surface gradient on \( \zeroset \). Consequently, if \( f \) is in \( \divfreezeroset \) then vanishing divergence of \(\nabla Sf \) implies that \( Sf \) is weakly harmonic on \( \zeroset \). By Weyl's Lemma \( Sf \) is harmonic on \( \zeroset \), which we summarize by saying that \( f \) is \( S \)-harmonic there.

The orthogonal complement of \( \divfreezeroset \) will also be important in what follows. It is characterized by the following lemma that we prove in the Appendix \ref{sec:prflemma}.

\begin{lem}\label{lem:orthogonal complement}
Let \( \zeroset \subset \SS \) be open and \( \SS \setminus \zeroset \) have positive surface measure, then the orthogonal complement of \( \divfreezeroset \) is the set
\begin{align}\label{eq:orthogonal complement}
    \divfreezeroset^{\perp}
	=
		\big\{
			\big( K + \tfrac{1}{2} I \big) \big( K - \tfrac{1}{2}I \big)
			(S^{-1}\varphi) \colon \varphi \in \sobzero
		\big\}.
\end{align}
\end{lem}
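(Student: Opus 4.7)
The plan is to recast the defining condition of $\divfreezeroset$ as an orthogonality relation in $\lts$, apply the layer-potential identity \eqref{eq:single-double layer} to rewrite that relation in terms of $K\pm\tfrac{1}{2}I$, and then verify that the resulting range is already closed in $\lts$.

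First I would use the distributional definition of surface divergence: $f\in \divfreezeroset$ is equivalent to $\scalp{\grads Sf,\grads\psi}_{\glts}=0$ for every $\psi\in \cc(\zeroset)$. Boundedness of $\grads\colon\sob\to\glts$ extends this relation by continuity to the Sobolev closure of $\cc(\zeroset)$, which by definition is $\sobzero$. Transferring $\grads S$ to the other side using its $\lts$-adjoint then yields
\[
\divfreezeroset=\bigl\{f\in\lts\colon \scalp{f,(\grads S)^{*}\grads\psi}_{\lts}=0 \text{ for all } \psi\in\sobzero\bigr\} = \bigl((\grads S)^{*}\grads(\sobzero)\bigr)^{\perp}.
\]
Next, any $\psi\in\sob$ can be written $\psi=S(S^{-1}\psi)$ with $S^{-1}\psi\in\lts$; inserting this into $(\grads S)^{*}\grads\psi$ and invoking \eqref{eq:single-double layer} gives $(\grads S)^{*}\grads\psi=-\bigl(K+\tfrac{1}{2}I\bigr)\bigl(K-\tfrac{1}{2}I\bigr)S^{-1}\psi$. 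As linear subspaces of $\lts$ this means $(\grads S)^{*}\grads(\sobzero)$ coincides with the set $R$ on the right-hand side of \eqref{eq:orthogonal complement}, and therefore $\divfreezeroset^{\perp}=R^{\perp\perp}=\overline{R}$.

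The main work then lies in showing that $R$ is already closed, so that the closure can be dropped. To this end I would view $T\psi:=(\grads S)^{*}\grads\psi$ as a bounded linear operator from $\sobzero$ (equipped with the Sobolev norm) to $\lts$, with $R=T(\sobzero)$, and establish that $T$ is bounded below. For $\psi\in\sobzero$, Cauchy--Schwarz together with the adjoint relation gives
\[
\|T\psi\|_{\lts}\,\|S^{-1}\psi\|_{\lts}\geq\scalp{T\psi,S^{-1}\psi}_{\lts}=\scalp{\grads\psi,\grads\psi}_{\glts}=\|\grads\psi\|_{\glts}^{2}.
\]
Since $S\colon\lts\to\sob$ is bijective, the open mapping theorem provides $\|S^{-1}\psi\|_{\lts}\leq C\|\psi\|_{\sob}$; and because $\SS\setminus\zeroset$ has positive surface measure, a Poincaré--Friedrichs inequality $\|\psi\|_{\lts}\leq c_{P}\|\grads\psi\|_{\glts}$ holds on $\sobzero$, so $\|\psi\|_{\sob}\leq(c_{P}+1)\|\grads\psi\|_{\glts}$. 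Combining these bounds makes $\|T\psi\|_{\lts}$ dominate a positive multiple of $\|\psi\|_{\sob}$, so $T$ has closed range, $R$ is closed, and $\divfreezeroset^{\perp}=R$ as claimed.

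The main obstacle is precisely this closedness step: the earlier paragraphs are routine annihilator manipulation, but removing the closure from $\overline{R}$ genuinely relies on the Poincaré--Friedrichs inequality on $\sobzero$, which is available only because of the hypothesis that $\SS\setminus\zeroset$ has positive measure---a direct indication of why the lemma includes that assumption.
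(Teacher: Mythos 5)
Your proof is correct. The first half---rewriting \( \divfreezeroset \) as the annihilator of \( (\grads S)^{*}\grads \) applied to test functions and invoking \eqref{eq:single-double layer}---is exactly the paper's route; the paper works with \( G=(\grads S)^{*}\grads\big(\cc(\zeroset)\big) \) and must then show that \( \overline{G} \) equals the set in \eqref{eq:orthogonal complement}, whereas you pass to \( \sobzero \) immediately by continuity, which is a harmless simplification. Where you genuinely diverge is the closedness step. The paper argues by hand on a Cauchy sequence \( f_n=\big(K+\tfrac{1}{2}I\big)\big(K-\tfrac{1}{2}I\big)S^{-1}\varphi_n \): it uses that \( K\pm\tfrac{1}{2}I \) are isomorphisms on \( \lts/\langle 1\rangle \) to get convergence of \( S^{-1}\varphi_n \) up to means, subtracts the means \( c_n \), and recovers convergence of \( c_n \) from the fact that \( \varphi_n-c_n=-c_n \) on the positive-measure set \( \SS\setminus\zeroset \). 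You instead prove that \( T=(\grads S)^{*}\grads \) is bounded below on \( \sobzero \) via the identity \( \scalp{T\psi,S^{-1}\psi}=\|\grads\psi\|^{2} \), boundedness of \( S^{-1} \), and a Poincar\'e--Friedrichs inequality on \( \sobzero \), which holds precisely because \( \SS\setminus\zeroset \) has positive measure and so \( \sobzero \) contains no nonzero constants. Both arguments consume the hypothesis at the same point, but yours is more structural: it yields a quantitative lower bound for \( T \), shows directly that the right-hand side of \eqref{eq:orthogonal complement} is a closed subspace, and thereby avoids taking any closure at all. The one step you should spell out is the Poincar\'e--Friedrichs inequality itself: it follows from the compact embedding of \( \sob \) into \( \lts \) together with the observation that every element of \( \sobzero \) vanishes almost everywhere on \( \SS\setminus\zeroset \) (being an \( L^{2} \)-limit of functions supported in \( \zeroset \)), so the only constant it contains is zero; this is the single place where the measure hypothesis enters and where the argument would fail without it.
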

In the above Lemma we require the mild condition that \( \SS \setminus \zeroset \) must have positive measure. However, in the following it will be necessary to strengthen this requirement and to demand that \( \SS \setminus \zeroset \) contains an open set. To ensure this, we will write \( \zeroset \Subset \SS \), meaning that \( \zeroset \) and its closure \( \overline{\zeroset} \) are both proper subsets of \( \SS \). The following theorem indicates why this additional condition on the closure of \( \zeroset \) is important.
Moreover, for the rest of the paper we will use the following notation: if \( \zeroset \) is an open subset of \( \SS \), then \( \proj_\zeroset  \) will denote the orthogonal projection from \( L^{2}(\SS) \) onto \( L^{2}(\zeroset) \). If the set \( \zeroset \) is clear from the context, we will write \( \proj \) instead of \( \proj_\zeroset  \) and call it \textit{the corresponding projection} to the set \( \zeroset \).

\begin{thm}\label{prop:dense1}
	Let \( \zeroset \subseteq \SS \) be open and \( \proj \) be the corresponding projection in \( \lts \).
The operators \( \proj \left(K - \frac{1}{2} \id \right):\divfreezeroset /\langle1\rangle \to L^2(\zeroset) \) and \( \proj \left(K + \frac{1}{2} \id \right):\divfreezeroset \to L^2(\zeroset) \) are injective. If additionally \( \overline{\zeroset} \) is a proper subset of \( \SS \), then the range of both operators is dense in \( L^{2}(\zeroset) \), but neither range is closed.
\end{thm}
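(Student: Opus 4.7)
The plan is to treat the three assertions in turn: injectivity via the Hardy component framework, density via the adjoint and unique continuation, and non-closedness via a concentrating-bump construction. For brevity write $T := \proj(K+\tfrac{1}{2}\id)|_{\divfreezeroset}$; the parallel operator $\proj(K-\tfrac{1}{2}\id)|_{\divfreezeroset/\langle 1\rangle}$ is handled by a symmetric argument with $B_+$ in place of $B_-$ (the mean-zero quotient absorbs the kernel of $K-\tfrac{1}{2}\id$).

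\textbf{Injectivity.} Given $f\in\divfreezeroset$ with $Tf=0$, the Hardy-space field $B_-f\in\Hm$ has normal $(K+\tfrac{1}{2}\id)f$ vanishing on $\zeroset$ and tangential part $\grads Sf$ that is divergence-free on $\zeroset$ by definition of $\divfreezeroset$. Hence $B_-f|_\zeroset$ is locally divergence-free on $\zeroset$ in the sense of Proposition~\ref{prop: equivalence}. Applying that proposition to a Lipschitz subdomain $O\subset\zeroset$ with connected boundary, with $\f_+=0$ and $\f_-=B_-f$, produces a field $\g\in\glts$ that vanishes on $O$ and has Hardy components $\g_+=0$ and $\g_-=B_-f$. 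Theorem~\ref{thm:old} then forces $\g_-=0$, and the injectivity of $B_-$ on $\lts$ gives $f=0$.

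\textbf{Density.} By duality, the range of $T$ is dense in $L^2(\zeroset)$ iff the adjoint is injective. A direct computation gives $T^*g = \Pi(K+\tfrac{1}{2}\id)g$ for $g\in L^2(\zeroset)$, where $\Pi$ denotes the orthogonal projection in $\lts$ onto $\divfreezeroset$. If $T^*g=0$, then $(K+\tfrac{1}{2}\id)g\in\divfreezeroset^\perp$, so Lemma~\ref{lem:orthogonal complement} and the invertibility of $K+\tfrac{1}{2}\id$ on $\lts$ imply $g=(K-\tfrac{1}{2}\id)\psi$ for some $\psi=S^{-1}\varphi$ with $\varphi\in\sobzero$. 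In layer-potential terms, $u:=\mathcal{S}\psi$ is harmonic on $\BB$ with interior Dirichlet trace $\varphi$ and interior Neumann trace $g$; both vanish on the non-empty open set $\SS\setminus\overline{\zeroset}$, which is precisely where the hypothesis $\overline{\zeroset}\Subset\SS$ enters. Holmgren's uniqueness theorem for the Cauchy problem then forces $u\equiv 0$ on $\BB$, whence $\varphi=0$, $\psi=0$, and $g=0$.

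\textbf{Non-closedness.} Since the range is already dense, being closed would force surjectivity, and the open mapping theorem would produce a bounded inverse of $T$; the plan is to rule this out by showing $T^*$ is not bounded below. Fix an interior point $x_0\in\zeroset$ separated from $\partial\zeroset$ and a sequence $\varphi_n\in\sobzero$ concentrating on a bump of shrinking support at $x_0$, and set $\Lambda\varphi_n := (K-\tfrac{1}{2}\id)S^{-1}\varphi_n$, which is the interior Dirichlet-to-Neumann image of $\varphi_n$. Because the Dirichlet-to-Neumann map on the sphere acts as an order-one pseudodifferential operator, $\|\Lambda\varphi_n|_\zeroset\|_{L^2(\zeroset)}$ diverges with the concentration scale, whereas the smoothness of the interior harmonic extension of $\varphi_n$ away from its support keeps $\|\Lambda\varphi_n|_{\SS\setminus\overline{\zeroset}}\|_{L^2}$ bounded. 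Normalizing $g_n := \Lambda\varphi_n|_\zeroset/\|\Lambda\varphi_n|_\zeroset\|$ yields a sequence in $L^2(\zeroset)$ of unit norm for which $(K+\tfrac{1}{2}\id)g_n$ lies within $o(1)$ in $\lts$ of $\divfreezeroset^\perp$, so that $\|T^*g_n\|\to 0$. The main obstacle is this last step: quantifying the pseudodifferential blow-up against the off-support decay carefully enough to certify that the construction really does violate a lower bound, whereas the injectivity and density parts follow mechanically from Proposition~\ref{prop: equivalence}, Theorem~\ref{thm:old}, Lemma~\ref{lem:orthogonal complement}, and classical unique continuation.
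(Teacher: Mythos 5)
Your injectivity and density arguments are correct but follow a partly different route from the paper; your non-closedness argument, however, contains a genuine gap.

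For injectivity you delegate the work to Proposition~\ref{prop: equivalence} and the external Theorem~\ref{thm:old}: since $B_-f$ has normal $\proj(K+\tfrac{1}{2}\id)f=0$ on \( \zeroset \) and divergence-free tangential part \( \grads Sf \) there, it is locally divergence-free, the pair \( (0,B_-f) \) arises from a field vanishing on a Lipschitz subdomain \( O\subset\zeroset \), and uniqueness in Theorem~\ref{thm:old} forces \( B_-f=0 \). This is logically sound (no circularity, since Theorem~\ref{thm:old} is quoted from the literature), but note that the paper deliberately avoids it and gives a self-contained continuation argument: on the cone over \( \zeroset \) the potential \( \mathcal{S}f \) and the radial extension of \( Sf \) are both harmonic with matching Dirichlet and vanishing Neumann data on \( \zeroset \), so their difference extends harmonically by zero across the sphere and vanishes identically, whence \( \mathcal{S}f \) is constant (interior case) or zero (exterior case). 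Your density argument coincides with the paper's up to the last step (adjoint injectivity, Lemma~\ref{lem:orthogonal complement}, invertibility of \( K+\tfrac{1}{2}\id \) giving \( g=(K-\tfrac{1}{2}\id)S^{-1}\varphi \)); you then invoke Holmgren for the vanishing Cauchy data on \( \SS\setminus\overline{\zeroset} \), while the paper simply applies its already-proven injectivity to the complementary set \( \suppoff=\SS\setminus\overline{\zeroset} \). The substance is the same, but the paper's route avoids having to justify Cauchy-data unique continuation for merely \( L^2 \) boundary data, which classical Holmgren does not literally cover.

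The gap is the non-closedness. You reduce it to showing that \( T^{*} \) is not bounded below and propose a sequence of concentrating bumps whose Dirichlet-to-Neumann images blow up in \( L^{2}(\zeroset) \) while remaining bounded off \( \overline{\zeroset} \); but you explicitly leave the decisive estimate \( \|T^{*}g_{n}\|\to 0 \) unverified, and without it the argument proves nothing. This quantitative detour is also unnecessary: your own injectivity argument (like the paper's) localizes to any open subset \( \omega\subset\zeroset \), because \( \grads Sf \) is divergence-free on \( \omega \) whenever it is on \( \zeroset \), so \( \proj_{\omega}(K\pm\tfrac{1}{2}\id) \) is injective on the same domains. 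Hence \( (K\pm\tfrac{1}{2}\id)g \) cannot vanish on any open subset of \( \zeroset \) unless \( g \) is trivial, so the range of \( \proj(K\pm\tfrac{1}{2}\id) \) misses every nonzero \( f\in L^{2}(\zeroset) \) supported away from some open \( \omega\subset\zeroset \). The range is therefore a proper subspace of \( L^{2}(\zeroset) \), and a dense proper subspace cannot be closed. Replacing your unfinished construction by this observation closes the proof.
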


\begin{proof} %
For injectivity of \( \proj (K- \tfrac{1}{2} \id) \), assume a \( g \in \divfreezeroset / \langle 1\rangle \) such that
\begin{equation}\label{Eq1}
\proj \left(K-\tfrac{1}{2} \id\right)g=0.
\end{equation}
For an open cone \( \mathcal{C}(\zeroset)=\{r\xi: \xi\in \zeroset, \ r \in (0,\infty)\} \) set \( \mathcal{C}_{\BB}(\zeroset) = \mathcal{C}(\zeroset) \cap \BB \) and define two functions \( u \) and \( w \)  as
\begin{align}\label{eqn:DNbv}
	u(r\xi) \coloneqq \mathcal{S}g(r\xi), &&
    w (r \xi) \coloneqq Sg (\xi) \qquad \big(\xi \in \zeroset, \ r \in \R, \ r\xi \in {\mathcal{C}_{\BB}(\zeroset)} \big).
\end{align}
Observe that by the definition of layer potentials in \eqref{eq:double layer}, condition \eqref{Eq1} states that the normal derivative of \( u \) from the inside to the sphere, \( \partial_{\n^+} u \), vanishes on \( \zeroset \).
Since \( g \) is in \( \divfreezeroset \), it is \( S \)-harmonic.
Consequently, \( \Delta w(r\xi)= r^{-2}\laps Sg (\xi)=0 \), and the function \( w \) is harmonic in \( \mathcal{C}_{\BB} (\zeroset)\). It follows that the functions \( u \) and \( w \) are both harmonic in \( \mathcal{C}_{\BB}(\zeroset) \), coincide on \( \zeroset \), and have a vanishing normal derivative on \( \zeroset \).
Set \( v = u - w \). Then \( v \) is continuous on \( \mathcal{C}_{\BB}(\zeroset) \cup \zeroset \), vanishes on \( \zeroset \), and its normal derivative also vanishes on \( \zeroset \).
Therefore, extending \( v \) by zero to the whole cone \( \mathcal{C}(\zeroset) \) defines a harmonic function. This function vanishes on an open set \( \mathcal{C}(\zeroset) \cap (\R^{d}\setminus \overline{\BB}) \) and coincides with \( v \) on \( \mathcal{C}_{\BB}(\zeroset) \). By analyticity of harmonic functions,  \( v \) must be identically zero,
which yields \( u(r \xi) =w(r\xi)=Sg(\xi) \) for all \( \xi\in \zeroset \) and \(r \in (0,1)\). By continuity of \( u \) at the origin, we have
\begin{align}
    Sg(\xi)=\lim_{r\rightarrow 0}u(r\xi)= u(0) \qquad \big( \xi\in\zeroset \big),
\end{align}
and hence, $u$ must be constant in $\mathcal{C}_{\BB}(\zeroset)$.
Because $u$ coincides with \( \mathcal{S}g \) on the non-empty open set $\Sigma$ and \( \mathcal{S}g \) is harmonic in all of $\BB$, it follows that \( \mathcal{S}g \) must be a constant function on $\BB$. Taking the trace to the sphere yields that \( Sg \) is constant on $\SS$. Since the single layer potential is injective and on the sphere preserves constant functions, \( g \) must be constant, and since the only constant function in \( \divfreezeroset / \langle 1 \rangle \) is the zero function, this proves injectivity.

The injectivity of \( \proj \left( K + \frac{1}{2} \id \right) \) follows the same line of reasoning as above,  after exchanging the role of \( \mathcal{C}_{\BB}(\zeroset) \) and \( \mathcal{C}(\zeroset) \cap (\R^{d} \setminus \overline{\BB}) \), and replacing the limit \( r \to 0 \) by  \( r \to \infty \). Since the region \( \mathcal{C}(\zeroset) \cap (\R^{d} \setminus \overline{\BB} )\) is unbounded, the preceding argument implies that \( \mathcal{S}g \) is not only constant but indeed zero outside the ball \( \BB \), and consequently \( Sg \) must vanish on \( \SS \). Since \( S \) is injective it follows that \( g \) must be identically zero.

If \( \omega \) is an open subset of \( \zeroset \) and \( \proj_{\omega} \) is the corresponding projection then the above argument implies that \( \proj_{\omega} \left( K \pm \tfrac{1}{2} \id \right) g \) is injective. Thus, the functions \( \left( K \pm \tfrac{1}{2} \id \right) g \) cannot vanish on any open subset of \( \zeroset \)  unless being identically zero. Hence, the range of \( \proj \left( K \pm \tfrac{1}{2} \id \right) \) is not the whole space \( L^{2}(\zeroset) \).

To prove the density of the range, let \( f \in L^{2}(\zeroset) \) be such that
\begin{align}\label{eq:locality condition}
    0
	= \scalp{f, \proj \left(K - \tfrac{1}{2} \id \right) g}
	= \scalp{ \left(K - \tfrac{1}{2} \id \right) f, g}
	\qquad \big(  g \in \divfreezeroset \slash \langle 1 \rangle \big),
\end{align}
where the second equality holds because \( K \) is self-adjoint and \( \proj f = f \). Thus, \( (K - \tfrac{1}{2}I) f  \) is in \( \divfreezeroset^{\perp} \) (since constant functions are not in the range of \( K - \tfrac{1}{2}I \)). From \eqref{eq:orthogonal complement}, there exists a function \( \varphi \in \sobzero \) such that
\(
    ( K-\tfrac{1}{2}I )\big(( K + \tfrac{1}{2}I ) S^{-1} \varphi - f \big) = 0 \).
Since \( \big( K - \tfrac{1}{2}I \big) \) is invertible on \( \lts \slash \scalp{1} \),  there must exists a constant \( c \) such that \( (K + \tfrac{1}{2}I) S^{-1} \varphi + c = f\). Since \( K+ \tfrac{1}{2}I \) and \( S \) preserve constant functions, there is a (possibly different) constant \( c \) with
\begin{align}\label{eq:density 1}
    \big( K + \tfrac{1}{2}I \big) S^{-1} (\varphi + c) = f.
\end{align}
If \( \overline{\zeroset} \) is a proper subset of \( \SS \), then \( \suppoff = \SS \setminus \overline{\zeroset} \) is open and the corresponding projection onto $L^2(\suppoff)$ is \( \id - \proj \).
Now, the function \( S^{-1} (\varphi + c) \) is in the set \( \divfreesuppoff \) since \( \grads S (S^{-1}(\varphi+ c)) = \grads (\varphi + c) = \grads \varphi \) vanishes on \( \suppoff \).
Moreover, since \( f \) also vanishes on \( \suppoff \), we have
\begin{align}\label{eq:surjectivity}
    (I - \proj) \big( K + \tfrac{1}{2}I \big) S^{-1} (\varphi + c) = (I - \proj)f = 0.
\end{align}
From the first part of the proof we know that \( (I - \proj) (K + \tfrac{1}{2}I) \) is injective on \( \divfreesuppoff \) and thus \eqref{eq:surjectivity} implies that \( S^{-1}(\varphi + c) \) must be zero. From \eqref{eq:density 1} we therefore get \( f = 0 \). This shows that the orthogonal complement of the range of \( \proj (K - \tfrac{1}{2} \id) \) contains only the zero function, i.e., the range is dense in \( L^{2}(\zeroset) \). At the same time, the range cannot be closed since it is not the whole \( L^{2}(\zeroset) \) (as we have noticed in the paragraph after proving injectivity). This proves the claim for the operator \( \proj \big( K-\tfrac{1}{2} \id \big) \).

To show density for the operator \( \proj (K + \tfrac{1}{2}\id) \) we follow similar arguments. Assume an \( f \in L^{2}(\zeroset) \) such that
\begin{align}\label{eq:locality condition 2}
    0
	= \scalp{f, \proj \left(K + \tfrac{1}{2} \id \right) g}
	= \scalp{ \left(K + \tfrac{1}{2} \id \right) f, g}
	\qquad \big(  g \in \divfreezeroset \big).
\end{align}
Then \( (K+ \tfrac{1}{2}I)f \) must be in \( \divfreezeroset^{\perp} \). Again, from \eqref{eq:orthogonal complement} and the fact that \( K + \tfrac{1}{2}I \) is invertible on the entire \( \lts \), there must be a \( \varphi \in \sobzero \) such that
\begin{align}\label{eq:density 2}
    \big( K - \tfrac{1}{2}I \big) S^{-1}\varphi = f.
\end{align}
With the same argument as above, the function \( S^{-1}\varphi \) is in \( \divfreesuppoff \) and \( (I-P)f = 0 \). By injectivity of the operator \( (I - \proj)(K - \tfrac{1}{2}I) \) on \( \divfreesuppoff \slash \scalp{1} \), the function \( S^{-1}\varphi \) must be constant and \( f = 0 \) from \eqref{eq:density 2} (since \( K - \tfrac{1}{2}I \) annihilates constant functions). Consequently, the range of \( P(K + \tfrac{1}{2}I) \) is dense in \( L^{2}(\zeroset) \) but,  as already stated, it is not the whole \( L^{2}(\zeroset) \).
\end{proof}

To avoid pathological cases, we will henceforth assume \( \zeroset \Subset \SS \). Then, the above theorem asserts that the operators \(  \proj \left( K + \tfrac{1}{2} \id \right) \) and \(  \proj \left( K - \tfrac{1}{2} \id \right) \) (acting on their corresponding domains) are invertible on their dense ranges, but the inverses are unbounded.
This observation leads us to define the following spaces: for \( \zeroset \Subset \SS \) open, we define
\begin{align}
    \uniqueplus  = \big\{ f \in \lts \colon \ \proj_\zeroset f = \proj_\zeroset(K + \tfrac{1}{2} \id) g, \text{ for some } g \in \divfreezeroset \big\} \slash \scalp{1},\label{eqn:dps}
\end{align}
and
\begin{align}
    \uniqueminus  = \big\{ f \in \lts \colon \proj_\zeroset f = \proj_\zeroset (K - \tfrac{1}{2} \id) g, \text{ for some } g \in \divfreezeroset / \langle 1 \rangle \big\}.\label{eqn:dms}
\end{align}
By the injectivity from Theorem \ref{prop:dense1}, if a \( g \) in the above definition exists, it must be unique;
by the density assertion of the same theorem, the space \( \uniqueplus  \) is dense in \( \lts \slash \scalp{1} \) and the space \( \uniqueminus  \) is dense in \( \lts \). We use these spaces to define the following operators.

\begin{defi}\label{def:ptm}
For \( \zeroset \Subset \SS \) open, we define the operator
\begin{align}
	\begin{split}
    \ptm \colon \lts/\langle1\rangle &\to \lts \\
	f &\mapsto \left[ \proj (K + \tfrac{1}{2}\id) \right]^{-1} \proj f - f,
	\end{split}
\end{align}
acting on the dense domain \( \dom{\ptm} = \uniqueplus  \),
and the operator
\begin{align}
	\begin{split}
	\mtp \colon \lts &\to \lts/\langle1\rangle \\
	f &\mapsto -\left[ \proj (K - \tfrac{1}{2}\id) \right]^{-1} \proj f - f + \scalp{f, 1},
	\end{split}
\end{align}
acting on the dense domain \( \dom{\mtp}= \uniqueminus  \).
\end{defi}

Both operators are unbounded, as the operators \( \proj (K \pm \tfrac{1}{2}) \) have no closed range.
\begin{rem}
For better readability, we mention that the indices on the operators $\ptm$ and $\mtp$ stand for  ``plus to minus'' or ``minus to plus'', which will indicate the mappings between Hardy components.
\end{rem}

We have the following relation between the domains of \( \ptm \) and \( \mtp \).

\begin{lem}\label{lem: domains of ptm}
	For \( \zeroset \Subset \SS \) open, we have  \( \ptm \big( \uniqueplus  \big) \subseteq \uniqueminus  \) and \( \mtp \big( \uniqueminus  \big) \subseteq \uniqueplus  \).
\end{lem}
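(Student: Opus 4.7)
The whole statement unwinds from Definition \ref{def:ptm} via the algebraic identities $\id - (K + \tfrac{1}{2}\id) = -(K - \tfrac{1}{2}\id)$ and $-\id - (K - \tfrac{1}{2}\id) = -(K + \tfrac{1}{2}\id)$, combined with the fact noted after \eqref{eq:double layer} that $K + \tfrac{1}{2}\id$ fixes constants and $K - \tfrac{1}{2}\id$ annihilates them. Consequently the proof will be essentially algebraic: what remains is to track how the mod-$\langle 1\rangle$ conventions in the definitions of $\uniqueplus$ and $\uniqueminus$ interact with the constant-function bookkeeping.

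For $\ptm(\uniqueplus) \subseteq \uniqueminus$, the plan is to take $f \in \uniqueplus$ and let $g \in \divfreezeroset$ be the unique element (by the injectivity part of Theorem \ref{prop:dense1}) with $\proj f = \proj (K + \tfrac{1}{2}\id) g$, so that by Definition \ref{def:ptm} one has $\ptm f = g - f$. Projecting and using the first identity above yields $\proj (\ptm f) = \proj g - \proj (K + \tfrac{1}{2}\id) g = -\proj (K - \tfrac{1}{2}\id) g$. Writing $-g$ as the sum of its zero-mean part $h \in \divfreezeroset/\langle 1\rangle$ and a constant, and noting that $K - \tfrac{1}{2}\id$ annihilates the latter, I obtain $\proj(\ptm f) = \proj(K - \tfrac{1}{2}\id) h$, which exhibits $\ptm f \in \uniqueminus$.

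For $\mtp(\uniqueminus) \subseteq \uniqueplus$, I would take $f \in \uniqueminus$ with its corresponding unique $h \in \divfreezeroset/\langle 1\rangle$ and write $\mtp f = -h - f + c$, where $c$ is the constant needed to make $\mtp f$ mean-zero (this is the role of the $\scalp{f,1}$ term in Definition \ref{def:ptm}). A parallel computation gives $\proj(\mtp f) = -\proj(K + \tfrac{1}{2}\id) h + c\,\proj 1$. Using $(K + \tfrac{1}{2}\id) c = c$, which follows from $Kc = \tfrac{1}{2}c$, the term $c\,\proj 1$ is absorbed as $\proj (K + \tfrac{1}{2}\id) c$, so that $\proj(\mtp f) = \proj (K + \tfrac{1}{2}\id)(-h + c)$ with $-h + c \in \divfreezeroset$ (since both $h$ and the constants lie in $\divfreezeroset$), placing $\mtp f$ in $\uniqueplus$.

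I do not foresee a substantive obstacle; the only thing to be careful about is that $\uniqueplus$ lives in $\lts/\langle 1\rangle$ while its associated $g$ ranges over the larger space $\divfreezeroset$, whereas for $\uniqueminus$ the situation is reversed, so the constant-function adjustment appears on different sides in the two inclusions. This asymmetry is exactly what the differing treatment of constants by $K + \tfrac{1}{2}\id$ and $K - \tfrac{1}{2}\id$ is built to absorb.
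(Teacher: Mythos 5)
Your proposal is correct and follows essentially the same route as the paper's proof: both arguments are the purely algebraic computation $\proj(\ptm f)=\proj g-\proj(K+\tfrac{1}{2}\id)g=-\proj(K-\tfrac{1}{2}\id)g$ (and its mirror for $\mtp$), with the constant-function adjustments handled exactly as you describe via the fact that $K-\tfrac{1}{2}\id$ annihilates constants while $K+\tfrac{1}{2}\id$ preserves them. No gaps.
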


\begin{proof}
Let \( \varphi \) be in \( \uniqueplus  \), and set
\begin{align}\label{eq:phi from varphi 1}
    \phi = \ptm(\varphi)
	= \Big[\proj \big( K + \tfrac{1}{2} \id  \big) \Big]^{-1} \proj \varphi - \varphi .
\end{align}
From the definition of the space \( \uniqueplus  \), we have \( \proj \varphi = \proj (K + \tfrac{1}{2} \id) g \) for some \( g \in \divfreezeroset \). Using this fact in \eqref{eq:phi from varphi 1} and applying \( \proj \) on both sides of the equation, we get
\begin{align}
  \proj \phi
  = \proj g - \proj (K + \tfrac{1}{2} \id) g
  = -\proj (K - \tfrac{1}{2} \id) g
  = -\proj (K - \tfrac{1}{2} \id) (g - \scalp{g,1}),
\end{align}
where the last equality follows since \( \big( K - \tfrac{1}{2} \id \big) \) annihilates constant functions. Since \( g \) and \( \scalp{g,1} \) (viewed as a constant function) are both in \( \divfreezeroset \), we have \( g- \scalp{g,1} \in \divfreezeroset / \langle 1 \rangle \) and consequently
 \( \phi \in \uniqueminus \).

For the second inclusion, let \( \phi \) be in \( \uniqueminus  \) and set
\begin{align}\label{eq:map of mtp}
    \varphi = \mtp(\phi) = -\Big[ \proj \big( K - \tfrac{1}{2}I \big) \Big]^{-1}  \proj \phi - \phi + \scalp{\phi,1}.
\end{align}
From the definition of \( \uniqueminus  \), we have \( \proj \phi = \proj(K - \tfrac{1}{2}I)g \) for some \( g \in \divfreezeroset \slash \scalp{1} \). Using this fact in \eqref{eq:map of mtp}, we get \( \scalp{\varphi,1} = -\scalp{g,1} = 0 \), and moreover,
\begin{align}
    \proj \varphi = -\proj g - \proj (K - \tfrac{1}{2}I)g + \proj \scalp{\phi,1}
	= - \proj(K + \tfrac{1}{2}I)g + \proj \scalp{\phi,1}.
\end{align}
Since \( K + \tfrac{1}{2}I \) preserves constant functions, the above equation implies \(\proj \varphi = - \proj(K + \tfrac{1}{2}I) (g + c)  \) for some constant function \( c \). Moreover, since constant functions are in \( \divfreezeroset \) and \( g \) is in \( \divfreezeroset \slash \scalp{1} \), the function \( g+c \) is in \( \divfreezeroset \).
Therefore, \( \varphi \) is in \( \uniqueplus  \).
\end{proof}

We now state the main theorem of this section.

\begin{thm}\label{thm:main}
Let \( \zeroset \Subset \SS \) be open.
If \( \f \in \glts \) is tangent on \( \zeroset \) and \( \divs(\f)\vert_{\zeroset} = 0 \),
then its Hardy component \( \f_{+} \) is in \( B_{+}\big(\uniqueplus \big) \) and its Hardy component \( \f_{-} \) is in \( B_{-}\big(\uniqueminus \big) \).
 Conversely, for every \( \f_{+} \in B_{+}\big(\uniqueplus \big) \) there is a unique \( \f_{-} \in B_{-} \big(\uniqueminus  \big)\) such that \( \f_{+} + \f_{-} \) is tangent and divergence-free on \( \zeroset \). The last statement also holds with \( \f_{+} \) and \( \f_{-} \) interchanged.
\end{thm}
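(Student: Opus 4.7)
The plan is to reduce the theorem to algebraic manipulations of the conditions characterising locally divergence-free vector fields in terms of their Hardy-component coefficients. I would write \( \f = B_+\varphi + B_-\phi + \f_{df} \) with \( \varphi \in \lts/\langle 1\rangle \), \( \phi \in \lts \), and \( \f_{df} \in \Divfree \). Since \( \f_{df} \) is tangent and divergence-free on the whole sphere, it contributes nothing to the hypothesis. Using the formulas for \( B_{\pm} \), the normal of \( \f_+ + \f_- \) equals \( (K-\tfrac{1}{2}\id)\varphi + (K+\tfrac{1}{2}\id)\phi \) and, wherever this vanishes, the remaining tangent part is \( \grads S(\varphi+\phi) \). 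Therefore \( \f \) is tangent and divergence-free on \( \zeroset \) if and only if
\begin{align*}
(\ast)\qquad \proj\bigl[(K-\tfrac{1}{2}\id)\varphi + (K+\tfrac{1}{2}\id)\phi\bigr] = 0 \quad\text{and}\quad \varphi+\phi \in \divfreezeroset.
\end{align*}

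For the forward direction, the key observation is the trivial operator identity \( (K+\tfrac{1}{2}\id) - (K-\tfrac{1}{2}\id) = \id \). Applying it to \( \varphi \), projecting by \( \proj \), and using the first half of \( (\ast) \), I get
\begin{align*}
\proj \varphi = \proj(K+\tfrac{1}{2}\id)\varphi - \proj(K-\tfrac{1}{2}\id)\varphi = \proj(K+\tfrac{1}{2}\id)(\varphi+\phi),
\end{align*}
and since \( \varphi+\phi \in \divfreezeroset \) by the second half of \( (\ast) \), this places \( \varphi \) in \( \uniqueplus \), hence \( \f_+ \in B_+(\uniqueplus) \). The analogous manipulation for \( \phi \) yields \( \proj \phi = -\proj(K-\tfrac{1}{2}\id)(\varphi+\phi) \); after shifting by a suitable constant, which lies in \( \divfreezeroset \) and is annihilated by \( K-\tfrac{1}{2}\id \), one obtains \( g \in \divfreezeroset/\langle 1\rangle \) with \( \proj\phi = \proj(K-\tfrac{1}{2}\id)g \), so \( \phi \in \uniqueminus \).

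For the converse, given \( \varphi \in \uniqueplus \), I would set \( \phi := \ptm(\varphi) \). Unpacking the definition, there is a unique \( g \in \divfreezeroset \) with \( \proj\varphi = \proj(K+\tfrac{1}{2}\id)g \) and \( \phi = g - \varphi \). A direct check then gives \( \varphi+\phi = g \in \divfreezeroset \) and \( \proj\bigl[(K-\tfrac{1}{2}\id)\varphi + (K+\tfrac{1}{2}\id)\phi\bigr] = \proj\bigl[-\varphi + (K+\tfrac{1}{2}\id)g\bigr] = 0 \), i.e., \( (\ast) \) holds, so \( \f_+ + \f_- \) is locally divergence-free on \( \zeroset \). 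Lemma \ref{lem: domains of ptm} ensures \( \phi \in \uniqueminus \). For uniqueness, if \( B_-\phi_0 \) alone is itself locally divergence-free on \( \zeroset \) with \( \phi_0 \in \uniqueminus \), then \( (\ast) \) with \( \varphi = 0 \) forces \( \phi_0 \in \divfreezeroset \) and \( \proj(K+\tfrac{1}{2}\id)\phi_0 = 0 \), whence \( \phi_0 = 0 \) by the injectivity statement of Theorem \ref{prop:dense1}. The symmetric statement starting from \( \f_- \in B_-(\uniqueminus) \) is proved identically, using \( \mtp \) instead of \( \ptm \) and swapping the roles of \( K\pm\tfrac{1}{2}\id \).

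The only delicate point is the bookkeeping around the quotient by constants in the definitions of \( \uniqueplus \), \( \uniqueminus \), and the domains of \( \ptm \), \( \mtp \): one must check that the constant adjustments needed to land in the right equivalence class are compatible with \( K+\tfrac{1}{2}\id \) preserving and \( K-\tfrac{1}{2}\id \) annihilating constants. Once this bookkeeping is in place, the rest is essentially a reformulation of condition \( (\ast) \) via the identity \( (K+\tfrac{1}{2}\id) - (K-\tfrac{1}{2}\id) = \id \), combined with Theorem \ref{prop:dense1} for the uniqueness step.
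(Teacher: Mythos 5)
Your proposal is correct and follows essentially the same route as the paper: the two conditions in your \((\ast)\) are exactly the paper's equations \eqref{eq:vanishing components}, the forward direction is the same algebraic rearrangement (your use of \((K+\tfrac{1}{2}\id)-(K-\tfrac{1}{2}\id)=\id\) is just the paper's substitution \(\phi=(\varphi+\phi)-\varphi\)), and the converse uses \(\ptm\), Lemma \ref{lem: domains of ptm}, and the injectivity from Theorem \ref{prop:dense1} exactly as the paper does. The only cosmetic difference is that you prove uniqueness by applying \((\ast)\) to the difference of two solutions rather than to each solution separately, which is an equivalent use of the same injectivity statement.
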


\begin{proof}
	Using the operators from \eqref{cross_s} we can express the Hardy-Hodge components of \( \f \) as \( \f_{+} = B_{+} \varphi \) and \( \f_{-} = B_{-} \phi \) for some unique \( \varphi, \phi \in \lts \) with \( \scalp{\varphi, 1} = 0 \).

For the first assertion, set \( \proj = \proj_{\zeroset} \) and  assume that \( \f \) is tangent and divergence-free on \( \zeroset \). Using \eqref{cross_s}, the normal and the tangent components of $\f$ restricted to \( \zeroset \) read
\begin{align}\label{eq:vanishing components}
    \proj \big(K-\tfrac{1}{2} \id \big) \varphi +\proj \big(K + \tfrac{1}{2} \id \big) \phi = 0, &&
	\divs\big(\grads S (\varphi + \phi)\big)\vert_{\zeroset} = 0.
\end{align}
From the second equation in \eqref{eq:vanishing components} it follows that the function \( \varphi + \phi \) is in \( \divfreezeroset \). Writing \( \phi = (\varphi + \phi) - \varphi \), inserting it into the first equation in \eqref{eq:vanishing components} and rearranging the terms yields
\begin{align}\label{eq:plus determines minus}
    \proj \varphi =
	 \proj \big( K + \tfrac{1}{2} \id \big)(\varphi + \phi).
\end{align}
Hence \( \varphi \) is in \( \uniqueplus  \) and \( \f_{+} = B_{+} \varphi \) is in \( B_{+}(\uniqueplus ) \).
To see that also the \( \f_{-} \) component is in the desired space, let \( c = \scalp{\phi, 1} \) be the mean of \( \phi \). Then, writing \( \varphi = (\varphi + \phi - c) - (\phi -c) \), performing the same manipulations as above, and using the fact that \( \big(K - \tfrac{1}{2}\id \big)c = 0 \), yields
\begin{align}\label{eq:minus determines plus}
    \proj \phi = - \proj \big( K - \tfrac{1}{2} \id \big) (\varphi + \phi - c).
\end{align}
The function \( \varphi + \phi \) and the constant function \( c \) are both in \( \divfreezeroset \). And since \( \varphi + \phi - c \) has zero mean, it is in \( \divfreezeroset / \langle 1\rangle \),
implying that \( \phi \) is in \( \uniqueminus  \) and \( \f_{-} = B_{-} \phi \) is in \( B_{-}(\uniqueminus ) \). Clearly, both components are unique by the direct-sum property of the Hardy-Hodge decomposition.

For the converse statement,  assume an \( \f_{+} = B_{+} \varphi \) with \( \varphi \in \uniqueplus  \) and set
\begin{align}\label{eq:phi from varphi}
    \phi = \ptm(\varphi)
	= \Big[\proj \big( K + \tfrac{1}{2} \id  \big) \Big]^{-1} \proj \varphi - \varphi.
\end{align}
By Lemma \ref{lem: domains of ptm} we have \( \phi = \ptm(\varphi) \in \uniqueminus  \),
and choosing \( \f_{-} = B_{-} \phi\), we get from \eqref{eq:phi from varphi} that
\begin{align}\label{eq:f++f-}
     \f_{+} + \f_{-} &= B_{+} \varphi + B_{-} \phi \nonumber\\
	 &= \n \, (\id - \proj) \Big[ \big(K - \tfrac{1}{2} \id \big) \varphi + \big(K + \tfrac{1}{2} \id \big) \phi \Big]
	 + \grads S (\varphi + \phi).
\end{align}
Thus, the normal component of \( \f_{+} + \f_{-}  \) vanishes on \( \zeroset \). Furthermore, it holds that \( \varphi + \phi \) is in \( \divfreezeroset \) because \eqref{eq:phi from varphi} and the definition of $\uniqueplus$ imply the existence of a $g$ in \( \divfreezeroset \) with
\begin{align}\label{eqn:g}
     \varphi+\phi=\Big[\proj \big( K + \tfrac{1}{2} \id  \big) \Big]^{-1} \proj \varphi=g.
\end{align}
Eventually, \eqref{eq:f++f-} implies that \( \f_{+} + \f_{-} \) is tangent and divergence-free on \( \zeroset \) as asserted.

Let \( \phi \) be as in \eqref{eq:phi from varphi} and assume that there is another \( \tilde{\phi} \in \lts \) leading to the field \( \tilde{\f}_{-} = B_{-}\tilde{\phi} \) such that \( \f_{+} + \tilde{\f}_{-} \) is tangent and divergence-free on \( \zeroset \). Then, from the first part of the proof, \eqref{eq:plus determines minus} holds for both, \( \phi \) and \( \tilde{\phi} \), and thus
\begin{align}
    \proj (K + \tfrac{1}{2} \id) (\varphi + \phi)
	= \proj \varphi = \proj (K + \tfrac{1}{2} \id) (\varphi + \tilde{\phi}).
\end{align}
By injectivity of \( \proj (K+ \tfrac{1}{2} \id) \) from Theorem \ref{prop:dense1}, it follows that \( \tilde{\phi} = \phi \).

Using the same arguments with obvious modifications, we see that for every \( \f_{-} = B_{-} \phi \) with \( \phi \in \uniqueminus  \), the field \( \f_{+} = B_{+} \mtp(\phi) \) satisfies the stated conditions and is also unique.
\end{proof}

\begin{cor}\label{cor:subspacedense}
For \( \zeroset \Subset \SS\) open, the space \( B_{+}(\uniqueplus ) \) is dense in \(\Hp\) and the space \( B_{-}(\uniqueminus ) \) is dense in \( \Hm \). %
\end{cor}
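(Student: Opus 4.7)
The plan is to observe that the corollary is a direct consequence of two facts already in hand: the density statements for $\uniqueplus \subseteq \lts/\langle 1 \rangle$ and $\uniqueminus \subseteq \lts$ recorded after Definition \ref{def:ptm} (which themselves follow from Theorem \ref{prop:dense1}), combined with the continuity of $B_+$ and $B_-$ together with the fact that the Hardy spaces are by definition the ranges of these operators.

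First I would recall from Section \ref{sec:intro} that $B_+ \colon \lts/\langle 1 \rangle \to \glts$ is bounded with closed range $\Hp$, so in particular $B_+$ is a bounded surjection onto $\Hp$; analogously $B_-$ is a bounded surjection from $\lts$ onto $\Hm$. Next I would invoke the elementary principle that any bounded linear surjection $T\colon X \to Y$ sends dense subsets of $X$ to dense subsets of $Y$: given $y \in Y$, fix $x \in X$ with $Tx = y$, approximate $x$ by a sequence $x_n$ in a dense subset $D \subseteq X$, and use continuity of $T$ to conclude that $Tx_n \to y$, so $y$ lies in the closure of $T(D)$. Applying this with $T = B_+$ and $D = \uniqueplus$ yields density of $B_+(\uniqueplus)$ in $\Hp$; the identical argument with $T = B_-$ and $D = \uniqueminus$ yields density of $B_-(\uniqueminus)$ in $\Hm$.

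There is no substantive obstacle here; the only point requiring minimal care is that the density of $\uniqueplus$ must be used in the quotient $\lts/\langle 1 \rangle$ rather than in $\lts$, since that is the space on which $B_+$ is defined. This matches precisely the form stated after Definition \ref{def:ptm} (and the quotient appearing in \eqref{eqn:dps}), so no ingredient beyond Theorem \ref{prop:dense1} and the boundedness of $B_\pm$ is needed.
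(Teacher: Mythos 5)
Your proposal is correct and follows essentially the same route as the paper: both rely on the density of \( \uniqueplus \) in \( \lts/\langle 1\rangle \) (resp. \( \uniqueminus \) in \( \lts \)) from Theorem \ref{prop:dense1} together with the boundedness of \( B_{\pm} \) and the fact that \( \Hp \) and \( \Hm \) are exactly their ranges. The only cosmetic difference is that you spell out the elementary "bounded surjections preserve density" argument, whereas the paper states it in one line.
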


\begin{proof}
Since \( \uniqueplus  \) is dense in \( \lts/\langle1\rangle \) and the operator \( B_{+} \colon \lts/\langle1\rangle \to \Hp\) is bounded and invertible, the image \( B_{+}\big( \uniqueplus  \big) \) is dense in \( B_{+}\big(\lts/\langle1\rangle \big) = \Hp \). Similar for \( \Hm \).
\end{proof}

\begin{cor}\label{cor:map between hardy}
For \( \zeroset \Subset \SS \) open, \( \varphi \in \uniqueplus  \), and \( \phi \in \uniqueminus  \), the functions \(  \ptm(\varphi) \) (resp. \( \mtp(\phi) \))is the unique function in \( \lts \) (resp. \( \lts\slash \scalp{1}\))such that the fields
\begin{align}
    \f = B_{+}\varphi + B_{-}\ptm(\varphi)
	&&
	\text{and}
	&&
	\g = B_{+} \mtp(\phi) + B_{-}\phi
\end{align}
are tangent and divergence-free on \( \zeroset \).

\begin{proof}
This is shown in the calculation after \eqref{eq:phi from varphi} and in the last sentence of that proof.
\end{proof}
\end{cor}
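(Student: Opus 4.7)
The plan is to derive the corollary directly from Theorem \ref{thm:main}. The operator \( \ptm \) was set up so that the converse direction of that theorem reads ``given \( \varphi \in \uniqueplus  \), the companion Hardy component of a locally divergence-free field is \( B_{-}\ptm(\varphi) \).'' Hence the corollary is essentially a repackaging of what has already been verified, with uniqueness being the only point that merits a brief separate argument.

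For existence I would set \( \phi = \ptm(\varphi) \) and note that by the definition of \( \uniqueplus  \) there exists \( g \in \divfreezeroset \) with \( \varphi + \phi = g \), exactly as in \eqref{eqn:g}. Writing out \( \f = B_{+}\varphi + B_{-}\phi \) through \eqref{cross_s} decomposes \( \f \) into a normal part \( \n \,[(K - \tfrac{1}{2}\id)\varphi + (K + \tfrac{1}{2}\id)\phi] \) and a tangent part \( \grads S(\varphi + \phi) \); the former vanishes on \( \zeroset \) by the defining equation of \( \phi = \ptm(\varphi) \), while the latter is divergence-free on \( \zeroset \) precisely because \( \varphi + \phi = g \in \divfreezeroset \). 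This step reproduces the computation carried out between \eqref{eq:phi from varphi} and \eqref{eqn:g} in the proof of Theorem \ref{thm:main}, so nothing new is required here.

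For uniqueness, suppose \( \psi \in \lts \) also yields a field \( B_{+}\varphi + B_{-}\psi \) that is tangent and divergence-free on \( \zeroset \). The forward direction of Theorem \ref{thm:main} then forces \( \psi \in \uniqueminus  \) and, more importantly, gives identity \eqref{eq:plus determines minus}, namely \( \proj\varphi = \proj(K + \tfrac{1}{2}\id)(\varphi + \psi) \). The same identity holds with \( \phi \) in place of \( \psi \); subtracting yields \( \proj(K + \tfrac{1}{2}\id)(\psi - \phi) = 0 \). Since both \( \varphi + \psi \) and \( \varphi + \phi \) lie in \( \divfreezeroset \), so does their difference \( \psi - \phi \), and the injectivity of \( \proj(K + \tfrac{1}{2}\id) \) on \( \divfreezeroset \) from Theorem \ref{prop:dense1} forces \( \psi = \phi \).

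The dual claim for \( \mtp(\phi) \) proceeds by a symmetric argument, this time invoking injectivity of \( \proj(K - \tfrac{1}{2}\id) \) on \( \divfreezeroset/\langle 1 \rangle \); it is precisely because this latter injectivity requires quotienting out constants (since \( (K - \tfrac{1}{2}\id) \) annihilates them) that the uniqueness of \( \mtp(\phi) \) must be asserted modulo \( \langle 1 \rangle \) rather than in all of \( \lts \). I do not expect any real obstacle: all of the structural work has been discharged in Theorem \ref{prop:dense1} and Theorem \ref{thm:main}, and the function of this corollary is only to give explicit names to the linear maps that realize the one-to-one correspondence between admissible Hardy components promised by Theorem \ref{thm:old}.
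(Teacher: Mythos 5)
Your proposal is correct and follows essentially the same route as the paper: existence is the computation around \eqref{eq:f++f-} and \eqref{eqn:g} in the proof of Theorem \ref{thm:main}, and uniqueness comes from applying \eqref{eq:plus determines minus} (resp.\ \eqref{eq:minus determines plus}) to both candidates and invoking the injectivity of \( \proj(K+\tfrac{1}{2}\id) \) on \( \divfreezeroset \) (resp.\ of \( \proj(K-\tfrac{1}{2}\id) \) on \( \divfreezeroset/\langle 1\rangle \)) from Theorem \ref{prop:dense1}. The paper's proof is merely a pointer to that calculation, so your write-up is a faithful (and slightly more explicit) rendering of the intended argument.
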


The mappings stated in Corollary  \ref{cor:map between hardy} are the desired mappings between the Hardy components of a locally divergence-free vector field. Moreover, for a given Hardy field \( \f_{+} = B_{+} \varphi \) we only need to check whether or not \( \varphi \) is in \( \uniqueplus  \) to know if it belongs to a locally divergence-free field. This closes both of the shortcomings of Theorem \ref{thm:old}. Yet, the operators \( \ptm \) and \( \mtp \) are unbounded rendering the reconstruction of one Hardy component from the other unstable.
In particular, the operators are difficult to evaluate numerically. Therefore, in the following section, we will introduce two variational approaches that are better suited for numerical applications.

\begin{rem}
The structure of the domains $\uniqueplus$ and $\uniqueminus$ from \eqref{eqn:dps} and \eqref{eqn:dms}, respectively, potentially allows for the generation of a suitable system of basis function if one has systems of basis functions for $\divfreezeroset$ and $L^2(\SS\setminus\zeroset)$, respectively. Various approaches to the latter can be found, e.g., in \cite{freedengerhards12,freeden98,schreiner97,simons06,wendland05} and references therein. Having a system of basis functions for $\uniqueplus$ could help to numerically evaluate the bounded extremal problem \eqref{eqn:BEP1} as introduced in the upcoming section. However, such numerical aspects are not part of the paper at hand.
\end{rem}

\section{Bounded extremal problems for Hardy components}\label{sec:analofop}

In the previous section we showed that the map between Hardy components is unbounded and thus not continuous.
 The question is then, under what conditions and in what sense can we compute $\ptm(\varphi)$ (resp. $\mtp(\phi)$)  taking into account that the input data \( \varphi \) (resp. \( \phi \)) are typically corrupted by noise.
In this section, we address this question by analyzing some related bounded extremal problems. We focus on the operator $\ptm$ only, but an analogous analysis can be applied to $\mtp$ without significant difference (noting that the operator $\ptm$ is the one most important, e.g., for applications to planetary magnetic fields, since measurements are typically only available outside the planets).
Opposed to many other inverse problems, our case is special as both operators $\ptm$ and its inverse are unbounded.

\subsection{Properties of $\ptm$, $\mtp$, and the first bounded extremal problem}\label{sec:bep1}

Given the input \( \varphi \), numerical calculations of \( \ptm(\varphi) \) can be challenging. Then, it is sometimes convenient to use an \textit{approximation sequence} of functions \( \varphi_{n} \) that converge to \( \varphi \) and for which the functions \( \ptm(\varphi_{n}) \) enjoy desirable numerical properties.
However, since $\ptm$ is unbounded, %
convergence of $\varphi_n$ to  $\varphi$ does not guarantee that the sequence $(\ptm(\varphi_n))$ approaches $\ptm(\varphi)$, nor that it converges at all. To avoid this issue, we consider the following bounded extremal problem that stems form the theory of constrained approximations in Hilbert spaces.

\paragraph{Bounded Extremal Problem 1.} \label{BEP1}Let $f\in \lts$ and $\Cone>0$ be fixed. Find a $\varphi$ in $\uniqueplus$ with $\|\ptm(\varphi)\|\leq \Cone$ such that
\begin{equation}\label{eqn:BEP1}\tag{$BEP_{1}$}
	\begin{aligned}
		 \| \varphi - f\|=\inf\left\{\| g- f\|:\,g\in \uniqueplus,\,\|\ptm(g)\|\leq \Cone\right\}.
	\end{aligned}
\end{equation}

In order to address the problem above, in particular the existence of its solution, we first require some additional properties of the operators $\ptm$ and $\mtp$.

\begin{prop}\label{prop:analysis of ptm}
	Let \( \zeroset \Subset \SS \) be open and denote the identity operator on \( \lts \) by \( I \). Then \( \mtp \) is the unbounded two-sided inverse of \( \ptm \) in the following sense
	\begin{enumerate}

		\item\label{prop it:1}
		we have \( \mtp \circ \ptm=I\vert_{\uniqueplus } \) and \( \uniqueplus  = \mtp \big( \uniqueminus  \big) \);

		\item\label{prop it:2}
		we have \( \ptm \circ \mtp=I\vert_{\uniqueminus } \) and \( \uniqueminus  = \ptm \big( \uniqueplus  \big) \).

	\end{enumerate}
\end{prop}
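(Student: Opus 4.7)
The plan is to reduce both items to the uniqueness of the Hardy-component pairing already established in Theorem \ref{thm:main} and repackaged in Corollary \ref{cor:map between hardy}. Lemma \ref{lem: domains of ptm} guarantees that the compositions $\mtp \circ \ptm$ and $\ptm \circ \mtp$ are well-defined on $\uniqueplus$ and $\uniqueminus$ respectively, so the only real content is that they are the identity and that each operator is surjective onto the other's domain.

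For item \ref{prop it:1}, I would fix $\varphi \in \uniqueplus$ and set $\phi := \ptm(\varphi) \in \uniqueminus$. Corollary \ref{cor:map between hardy} applied to $\varphi$ gives that $B_+\varphi + B_-\phi$ is tangent and divergence-free on $\zeroset$. Applied in the opposite direction to $\phi \in \uniqueminus$, the same corollary produces $B_+\mtp(\phi) + B_-\phi$ with the same tangential-divergence-free property, and asserts that $\mtp(\phi)$ is the \emph{unique} element of $\lts / \langle 1 \rangle$ with this property. Hence $\mtp(\phi) = \varphi$, i.e., $\mtp \circ \ptm = I|_{\uniqueplus}$. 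The image identity $\uniqueplus = \mtp(\uniqueminus)$ follows at once: $\mtp(\uniqueminus) \subseteq \uniqueplus$ is Lemma \ref{lem: domains of ptm}, while the reverse inclusion is $\varphi = \mtp(\ptm(\varphi)) \in \mtp(\uniqueminus)$ for every $\varphi \in \uniqueplus$.

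Item \ref{prop it:2} is entirely symmetric: fix $\phi \in \uniqueminus$, set $\varphi := \mtp(\phi) \in \uniqueplus$, observe that both $B_+\varphi + B_-\phi$ and $B_+\varphi + B_-\ptm(\varphi)$ are tangent and divergence-free on $\zeroset$, and invoke the uniqueness of the $\f_-$ companion to a given $\f_+$ from Corollary \ref{cor:map between hardy} to conclude $\ptm(\varphi) = \phi$. This yields $\ptm \circ \mtp = I|_{\uniqueminus}$, and $\uniqueminus = \ptm(\uniqueplus)$ follows as before. There is no substantial obstacle; the only subtlety worth checking is that the constant-correction term $+\scalp{f,1}$ appearing in the definition of $\mtp$ interacts correctly with the zero-mean convention on $\uniqueplus$, so that the composition lands in the right subspace. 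As a sanity check, one can verify the identities algebraically by writing $\proj \varphi = \proj (K + \tfrac{1}{2}\id) g$ with $g \in \divfreezeroset$, deducing $\ptm(\varphi) = g - \varphi$, and then unwinding the formula for $\mtp$ at $g - \varphi$; the constants cancel precisely and one recovers $\varphi$ without any appeal to uniqueness.
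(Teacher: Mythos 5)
Your proposal is correct and follows essentially the same route as the paper: both arguments reduce the two-sided inverse property to the uniqueness assertion of Corollary \ref{cor:map between hardy} (the paper phrases the final step as $B_{+}(\varphi-\tilde\varphi)=0$ plus injectivity of $B_{+}$ on $\lts/\langle 1\rangle$, which is the same uniqueness you invoke directly), and both obtain the image identities from Lemma \ref{lem: domains of ptm} together with $\varphi=\mtp(\ptm(\varphi))$. Your closing algebraic verification is a nice consistency check but does not constitute a different method.
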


\begin{proof}
	For \ref{prop it:1}: if \( \varphi \) is in \( \uniqueplus  \), then \( \ptm( \varphi ) \) is in \( \uniqueminus  \) by Lemma \ref{lem: domains of ptm}. Thus, \( \mtp \circ \ptm \) is well-defined on \( \uniqueplus \). Consider the mapping sequence \( \varphi \mapsto \ptm(\varphi) \mapsto \mtp \circ \ptm  (\varphi) \), and set for brevity \( \phi = \ptm(\varphi) \), \( \tilde{\varphi} =\mtp \circ \ptm (\varphi) \). Then, by Corollary \ref{cor:map between hardy}, \( B_{+}\varphi \) and \( B_{+}\tilde{\varphi} \) are unique fields such that both \( B_{+}\varphi + B_{-}\phi \) and \( B_{+}\tilde{\varphi} + B_{-}\phi \) are locally divergence-free.
	From the uniqueness assertion, it follows that \( B_{+}(\varphi - \tilde{\varphi})=0 \). Since \( B_{+} \) is injective on \( \lts \slash \scalp{1} \) and since functions in \( \uniqueplus  \) have zero mean, this yields \( \tilde{\varphi} = \varphi \) and \( \varphi = \mtp(\phi) =\mtp \circ \ptm (\varphi) \). This shows that if \( \varphi \) is in \( \uniqueplus \) then \( \varphi = \ptm (\phi) \) for some \( \phi \in \uniqueminus  \) and thus, \( \uniqueplus  \subseteq \mtp \big( \uniqueminus  \big) \). Lemma \ref{lem: domains of ptm} provides the opposite inclusion. This implies \( \ptm \circ \mtp = I \vert_{\uniqueminus } \).
	The proof for the corresponding statement in \ref{prop it:2} follows the same reasoning.
\end{proof}
We will sometimes write  \( \ptm^{-1} = \mtp \), to refer to the statement of the Proposition \ref{prop:analysis of ptm}. However, one should keep in mind that both operators are unbounded and their product is defined only on subdomains of \( \lts \).

In the following theorem we establish that the operators \( \ptm \) and \( \mtp \) are closed. This property is paramount for what follows.

\begin{thm}\label{thm:closed graph}
	The graph of the operator \( \ptm \) (resp. \( \mtp \)) is weakly closed in \( \lts\slash \scalp{1} \times  \lts \) (resp. \( \lts \times  \lts\slash \scalp{1}\)). In particular, \( \ptm \) and \( \mtp \) are closed operators.
\end{thm}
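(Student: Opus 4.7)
My plan is to realize the graph of $\ptm$ as the preimage, under a bounded linear map between Hilbert spaces, of a closed linear subspace of $\glts$; weak closedness then follows because bounded linear maps are weak-to-weak continuous and closed linear subspaces of a Hilbert space coincide with weakly closed ones by Mazur's theorem. The argument for $\mtp$ will be a mirror image.

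First, I would use Theorem \ref{thm:main} together with Corollary \ref{cor:map between hardy} to express the graph geometrically. Introduce the bounded linear map
\[
	\Phi \colon \lts / \langle 1 \rangle \times \lts \to \glts, \qquad \Phi(\varphi, \phi) := B_{+}\varphi + B_{-}\phi,
\]
and let $V \subset \glts$ denote the linear subspace of fields that are tangent on $\zeroset$ and have vanishing distributional surface divergence on $\zeroset$. I claim that $(\varphi,\phi)$ lies in the graph of $\ptm$ if and only if $\Phi(\varphi,\phi) \in V$. The forward direction is exactly Corollary \ref{cor:map between hardy}. Conversely, if $\Phi(\varphi,\phi) \in V$, then Theorem \ref{thm:main} places $\varphi$ in $\uniqueplus$, and the uniqueness clause of Corollary \ref{cor:map between hardy} (combined with the injectivity of $B_{-}$) forces $\phi = \ptm(\varphi)$.

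Second, I would check that $V$ is closed in $\glts$ by writing it as the intersection of the kernels of the continuous linear functionals $\f \mapsto \langle \f, \n\psi \rangle$ for $\psi \in L^{2}(\zeroset)$ and $\f \mapsto \langle \f, \grads \eta \rangle$ for $\eta \in \cc(\zeroset)$ (the latter being meaningful since $\eta$ extended by zero belongs to $\sob$). As a closed linear subspace of a Hilbert space, $V$ is weakly closed; since $\Phi$ is bounded linear and hence weak-to-weak continuous, its preimage $\Phi^{-1}(V)$, which is the graph of $\ptm$, is weakly closed as well. For $\mtp$ one uses the analogous bounded linear map $(\phi, \varphi) \mapsto B_{+}\varphi + B_{-}\phi$ from $\lts \times \lts / \langle 1 \rangle$ into $\glts$ and pulls back the same $V$. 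The ``in particular closed'' tail of the theorem is then automatic, because strong convergence implies weak convergence.

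I do not expect a genuine obstacle. The only point that deserves a careful reading of the earlier results is the ``if''-direction of the graph characterization: it must combine the existence assertion of Theorem \ref{thm:main} (to guarantee $\varphi \in \uniqueplus$) with the uniqueness assertion of Corollary \ref{cor:map between hardy} (to identify the second coordinate as $\ptm(\varphi)$ rather than some other function mapping onto the same locally divergence-free field). Everything beyond that is routine Hilbert-space weak-topology bookkeeping.
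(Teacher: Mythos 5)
Your argument is correct, and it rests on the same two pillars as the paper's proof: the identification of the graph of $\ptm$ with the set of pairs $(\varphi,\phi)$ for which $B_{+}\varphi+B_{-}\phi$ is tangent and divergence-free on $\zeroset$ (via Theorem \ref{thm:main} and Corollary \ref{cor:map between hardy}), and the weak closedness of that space of fields. The difference is organizational: the paper runs a sequential argument, taking a weakly convergent sequence $(\varphi_n,\phi_n)$ in the graph, pushing it forward to a weakly convergent sequence of locally divergence-free fields, and then recovering the limit pair by testing against $B_{+}^{\ast}\g$ and $B_{-}^{\ast}\g$ and invoking surjectivity of $B_{+}^{\ast}$ together with uniqueness of weak limits; it then appeals to the equivalence of weak and norm closure for convex sets to conclude ordinary closedness. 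You instead exhibit the graph outright as $\Phi^{-1}(V)$ for the bounded map $\Phi(\varphi,\phi)=B_{+}\varphi+B_{-}\phi$ and the norm-closed subspace $V$ of fields tangent and divergence-free on $\zeroset$, and conclude by weak-to-weak continuity of bounded operators. This buys you directness: you get topological (not merely sequential) weak closedness in one step, you avoid the adjoint manipulations entirely, and the passage to norm closedness is immediate since strong convergence implies weak convergence. The one step that genuinely needs care --- and you flag it correctly --- is the converse inclusion $\Phi^{-1}(V)\subseteq G(\ptm)$, which requires both the first part of Theorem \ref{thm:main} (to place $\varphi$ in $\uniqueplus$) and the uniqueness clause (to force $\phi=\ptm(\varphi)$, using injectivity of $B_{-}$); the paper's proof establishes exactly this on the fly, so the two arguments are, at bottom, the same proof in different clothing.
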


\begin{proof}
	Let \( \zeroset \Subset \SS \) be open. It is known that the space of tangent and divergence-free vector fields on $\Sigma$ is weakly closed.
	Now, let \( ( \varphi_{n}, \phi_{n} )_{n\in\bbN} \) be a sequence in the graph of \( \ptm \) that converges weakly to \(( f, h) \in  \lts\slash \scalp{1} \times \lts \). Then in particular, the sequence \(( \varphi_{n})_{n\in\bbN} \) in \( \uniqueplus  \) converges weakly to \( f \in \lts\slash \scalp{1} \), and the sequence \( (\phi_{n})_{n\in\bbN} \)  converges weakly to \( h \in \lts \). From Proposition \ref{prop:analysis of ptm} we know that \( \phi_{n} = \ptm(\varphi_{n})\) is in \( \uniqueminus  \) for each \( n \); and by Corollary \ref{cor:map between hardy} we know that for each \( n \) the field \( B_{+}\varphi_{n} + B_{-} \phi_{n} \) is tangent and divergence-free on \( \zeroset \). Moreover, since it holds for every \( \g \in \glts \) that
	\begin{align}
		|\scalp{B_{+} \varphi_{n} + B_{-} \phi_{n}, \g} | \leq |\scalp{\varphi_{n}, B_{+}^{\ast}\g}| + |\scalp{\phi_{n}, B_{-}^{\ast}\g}|,
	\end{align}
	the sequence \(  \big(B_{+}\varphi_{n} + B_{-} \phi_{n} \big)_{n\in\bbN}  \) converges weakly to some \( \f \) in \( \glts \). Due to the weak closure of the space of tangent and divergence-free vector fields, \( \f \) is tangent and divergence-free on \( \zeroset \). From Theorem \ref{thm:main} and Corollary \ref{cor:map between hardy}, there is a unique \( \varphi_{0} \in \uniqueplus  \) such that the Hardy components of \( \f \) are \( B_{+}\varphi_{0} \)  and \( B_{-} \big(\ptm(\varphi_{0}) \big) \).
	It follows that for every \( \g \in \glts \) we have
	\begin{align}\label{eq:weak convergence}
		|\scalp{B_{+}\big(\varphi_{n} - \varphi_{0}\big) + B_{-}\big(\ptm(\varphi_{n}) - \ptm(\varphi_{0})\big), \g}| \longrightarrow \ 0
		\qquad \text{as } n \to \infty.
	\end{align}
	In particular, using the fact that the Hardy spaces are orthogonal, we see
	\begin{align}
		|\scalp{B_{+} \big( \varphi_{n}  - \varphi_{0} \big), \g}|
		= |\scalp{ \varphi_{n}  - \varphi_{0}, B_{+}^{\ast} \g}| \longrightarrow 0
		\qquad \text{as } n \to \infty,
	\end{align}
	for every \( \g \in \Hp \).
	Since \( B_{+} \) is invertible, so is \( B_{+}^{\ast} \). Especially, \( B_{+}^{\ast} \) is onto \( \lts \slash \scalp{1} \), and thus, \( (\varphi_{n})_{n\in\bbN} \) converges weakly to \( \varphi_{0} \). By the uniqueness of limit points, we have \( \varphi_{0} = f \).
	Choosing \( \g \) in \eqref{eq:weak convergence} to be in \( \Hm \)  and using the same arguments, we see that \( \ptm(\varphi_{n}) \) converges weakly to \( \ptm(\varphi_{0}) = h \). Consequently, the graph of \( \ptm \) is weakly closed. Moreover, weakly closure and norm closure are equivalent for convex set in Hilbert space(e.g., \cite[Thm. 3.12]{rudin91}). Hence, \( \ptm \) is closed, and thus so is \( \mtp = \ptm^{-1} \).
\end{proof}

\begin{cor}\label{cor:alternative}
	Let \( \zeroset \Subset \SS \) be open. A function \( f \in \lts \) is in $\uniqueplus$ if and only if there exists a sequence of functions  $(f_n)_{n\in\bbN}\subset \uniqueplus$ such that $\lim_{n\to\infty}\|f_{n}- f\|=0$ and $\left(\|\ptm(f_n)\|\right)_{n\in\bbN}$ is bounded; in that case, $\ptm(f_{n})$ converges to $\ptm(f)$ in the weak sense.
\end{cor}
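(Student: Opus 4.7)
}

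The plan is to prove both implications, with the forward direction being immediate and the reverse direction following from a standard bounded-sequence/weak-compactness argument combined with the weak closure of the graph of $\ptm$ established in Theorem~\ref{thm:closed graph}. For the forward direction, if $f \in \uniqueplus$, I would simply take the constant sequence $f_n = f$; then $\|f_n - f\| = 0$ for all $n$, the sequence $(\|\ptm(f_n)\|)_{n\in\bbN}$ is the constant value $\|\ptm(f)\|$ and thus bounded, and $\ptm(f_n) = \ptm(f)$ converges (even in norm, hence weakly) to $\ptm(f)$.

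For the reverse direction, assume $(f_n) \subset \uniqueplus$ with $f_n \to f$ in $\lts$ and $M := \sup_n \|\ptm(f_n)\| < \infty$. Since $(\ptm(f_n))$ is a bounded sequence in the Hilbert space $\lts$, by the Banach--Alaoglu theorem it has a subsequence $(\ptm(f_{n_k}))$ that converges weakly to some $h \in \lts$. The strong convergence $f_{n_k} \to f$ implies weak convergence, so $(f_{n_k}, \ptm(f_{n_k}))$ converges weakly to $(f, h)$ in $\lts \slash \scalp{1} \times \lts$. (Here I would note that $f \in \lts \slash \scalp{1}$ as the strong limit of zero-mean functions.) By Theorem~\ref{thm:closed graph}, the graph of $\ptm$ is weakly closed, so $(f,h)$ lies in this graph, which gives $f \in \uniqueplus$ and $\ptm(f) = h$.

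It remains to upgrade subsequential weak convergence of $\ptm(f_n)$ to weak convergence of the whole sequence to $\ptm(f)$. I would argue by contradiction via the standard subsequence principle: if $(\ptm(f_n))$ did not converge weakly to $\ptm(f)$, there would exist $g \in \lts$, $\varepsilon > 0$, and a subsequence $(f_{n_j})$ with $|\scalp{\ptm(f_{n_j}) - \ptm(f), g}| \geq \varepsilon$ for all $j$. Applying the weak compactness argument above to $(f_{n_j})$, I would extract a further subsequence along which $\ptm(f_{n_{j_\ell}})$ converges weakly to some $h'$; by the same closure argument $h' = \ptm(f)$, contradicting the lower bound on the inner products.

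The main obstacle is really just the weak closure of the graph, which is already done in Theorem~\ref{thm:closed graph}; the remaining work is essentially extracting a weakly convergent subsequence, invoking that theorem, and applying the subsequence principle to pass from subsequential weak convergence to convergence of the entire sequence.
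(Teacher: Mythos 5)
Your proof is correct and follows essentially the same route as the paper: the constant sequence for the forward direction, weak compactness of bounded sets in a Hilbert space to extract a weakly convergent subsequence, the weak closure of the graph from Theorem~\ref{thm:closed graph} to identify the limit, and the subsequence principle to pass to convergence of the whole sequence. Your write-up is in fact slightly more explicit than the paper's on the last upgrading step.
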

\begin{proof}
For the ``only if'' statement it is enough to choose the sequences of functions \( f_{n} = f \), which satisfies the desired properties.

For the ``if'' statement, let \( (f_{n})_{n\in\bbN} \) be a sequence in \( \uniqueplus \) that converges to \( f \in \lts \) and assume that $\left(\|\ptm(f_n)\|\right)_{n\in\bbN}$ is bounded. Recall, that a closed unit ball of a Hilbert space is compact in the weak topology (e.g.\ \cite[Thm. 1.6.7]{kadison1997fundamentalsI}), which implies that $\left(\ptm(f_n)\right)_{n\in\bbN}$ has weak cluster points.
Let $g$ be one of these cluster points and $(\ptm (f_{n_{i}}))_{i\in\bbN}$ be a subsequence that weakly converges to $g$. Consequently, \( (f_{n_{i}}, \ptm(f_{n_{i}})) \) is a sequence in the graph of \( \ptm \), weakly converging to \( (f, g) \). Since the graph of \( \ptm \) is weakly closed by Theorem \ref{thm:closed graph}, it follows that \( f \) is in $\uniqueplus $ and thus $g=\ptm (f)$ and $\ptm (f_n)$ weakly converges to $\ptm (f)$.
\end{proof}

\begin{cor}\label{cor:closednessoffeasibleregion}
	Let \( \zeroset \Subset \SS \) be open. Then for any constant $\Cone>0$, the set
	$$\uniqueplus^{\Cone}:=\{g\in \uniqueplus: \|\ptm(g)\|\leq \Cone\}$$
	is closed in $\lts$.
\end{cor}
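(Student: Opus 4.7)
The plan is to deduce the closedness directly from Corollary \ref{cor:alternative} combined with the weak lower semicontinuity of the Hilbert-space norm. The underlying intuition is that although $\ptm$ is unbounded, the graph-closedness established in Theorem \ref{thm:closed graph} forces sublevel sets of $\|\ptm(\cdot)\|$ to be closed.

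First I would take an arbitrary sequence $(g_n)_{n \in \bbN} \subset \uniqueplus^{\Cone}$ that converges in $\lts$-norm to some $f \in \lts$, and aim to show $f \in \uniqueplus^{\Cone}$. By definition of $\uniqueplus^{\Cone}$, every $g_n$ lies in $\uniqueplus$ and satisfies $\|\ptm(g_n)\| \leq \Cone$, so in particular the sequence $(\|\ptm(g_n)\|)_{n \in \bbN}$ is bounded. Corollary \ref{cor:alternative} then directly applies and yields two conclusions: $f$ belongs to $\uniqueplus$, and $\ptm(g_n)$ converges weakly to $\ptm(f)$ in $\lts$.

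It remains only to verify the bound $\|\ptm(f)\| \leq \Cone$. Since the norm on a Hilbert space is weakly sequentially lower semicontinuous, the weak convergence $\ptm(g_n) \rightharpoonup \ptm(f)$ implies
\begin{align}
\|\ptm(f)\| \;\leq\; \liminf_{n \to \infty} \|\ptm(g_n)\| \;\leq\; \Cone.
\end{align}
Hence $f \in \uniqueplus^{\Cone}$, and the set is closed in $\lts$.

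I do not anticipate any real obstacle here, since all the heavy lifting has already been done in Theorem \ref{thm:closed graph} and its Corollary \ref{cor:alternative}; the only minor point worth flagging is that even though $\ptm$ is unbounded, we never need to control $\|\ptm(f)\|$ from above by continuity, because the uniform a priori bound $\|\ptm(g_n)\| \leq \Cone$ together with weak lower semicontinuity is exactly what pins down the limit. This is a standard trick for showing that closed unbounded operators have closed sublevel sets of the graph norm.
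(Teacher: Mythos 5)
Your proof is correct and follows essentially the same route as the paper: both apply Corollary \ref{cor:alternative} to get $f \in \uniqueplus$ together with weak convergence of $\ptm(g_n)$ to $\ptm(f)$, and then bound $\|\ptm(f)\|$ by $\Cone$. The only cosmetic difference is that you invoke weak lower semicontinuity of the norm as a known fact, whereas the paper re-derives it on the spot via $\langle \ptm(g),\ptm(g)\rangle = \lim_n \langle \ptm(g),\ptm(g_n)\rangle \leq \Cone\|\ptm(g)\|$.
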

\begin{proof}
	Let $(g_n)_{n\in\bbN}$ be a Cauchy sequence in \( \uniqueplus^{\Cone} \) that converges to $g$. Because $\|\ptm(g_n)\|$ is bounded, Corollary \ref{cor:alternative}  states that $g$ is in $\uniqueplus$  and that $\ptm(g_n)$ weakly converges to $\ptm(g)$. Thus, we have
	\begin{equation}
		\langle \ptm(g),\ptm(g)\rangle
		= \lim_{n\rightarrow \infty}\langle \ptm(g),\ptm(g_n)\rangle
		\leq \Cone \|\ptm(g)\|,
	\end{equation}
	which gives $\|\ptm(g)\|\leq \Cone$. Therefore $g$ is in $\uniqueplus^{\Cone}$ and $\uniqueplus^{\Cone}$ is a closed set.
\end{proof}

The closedness of $\ptm$ and $\mtp$ and the subsequent corollaries now allow us to discuss the initial bounded extremal problem \eqref{eqn:BEP1}. It yields that \eqref{eqn:BEP1} is well-defined for any $f$ in $\lts$ instead of just in the dense subspace $\uniqueplus$. This give us the freedom to consider noise contaminated input. Although this paper will not go into any reconstruction algorithms, we would still like to established the following properties.

\begin{cor}\label{cor:BEP1_P1}
	Let \( \zeroset \Subset \SS \) be open. For any $f\in \lts$ and any fixed $\Cone>0$, a minimizer of \eqref{eqn:BEP1} exists and this minimizer is unique. Moreover, if $\varphi_{\Cone}$ denotes the minimizer of \eqref{eqn:BEP1}, then for $f\in \lts/\langle1\rangle$, $\|\varphi_{\Cone}-f\|\rightarrow 0$ as $\Cone\rightarrow \infty$.
\end{cor}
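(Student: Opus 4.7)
The plan is to recast \eqref{eqn:BEP1} as the projection of $f$ onto a nonempty closed convex subset of the Hilbert space $\lts$, which supplies existence and uniqueness through the standard Hilbert projection theorem, and then to derive the convergence assertion from a density argument.

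First I would verify that the feasible region $\uniqueplus^{\Cone}$ is nonempty, closed, and convex. Nonemptiness is immediate: the zero function lies in $\uniqueplus$ (take $g = 0 \in \divfreezeroset$ in the defining relation \eqref{eqn:dps}) and $\ptm(0) = 0$, so $0 \in \uniqueplus^{\Cone}$ for every $\Cone > 0$. Closedness in $\lts$ is precisely the content of Corollary \ref{cor:closednessoffeasibleregion}. Convexity follows because $\uniqueplus$ is a linear subspace of $\lts/\langle 1 \rangle$ and $\ptm$ is linear: for $g_1, g_2 \in \uniqueplus^{\Cone}$ and $t \in [0,1]$,
$$\|\ptm(t g_1 + (1-t) g_2)\| \leq t \|\ptm(g_1)\| + (1-t) \|\ptm(g_2)\| \leq \Cone.$$
With these three properties in hand, the classical Hilbert projection theorem supplies a unique minimizer $\varphi_{\Cone} \in \uniqueplus^{\Cone}$ of the map $g \mapsto \|g - f\|$, which is the unique solution to \eqref{eqn:BEP1}.

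For the convergence statement, fix $f \in \lts/\langle 1 \rangle$ and $\epsilon > 0$. Since $\uniqueplus$ is dense in $\lts/\langle 1 \rangle$ (by the density assertion of Theorem \ref{prop:dense1}, as recorded immediately after the definitions \eqref{eqn:dps} and \eqref{eqn:dms}), pick $g_\epsilon \in \uniqueplus$ with $\|g_\epsilon - f\| < \epsilon$ and set $K_\epsilon = \|\ptm(g_\epsilon)\|$. For every $\Cone \geq K_\epsilon$, the function $g_\epsilon$ lies in the feasible set $\uniqueplus^{\Cone}$, so by optimality of $\varphi_{\Cone}$,
$$\|\varphi_{\Cone} - f\| \leq \|g_\epsilon - f\| < \epsilon.$$
Letting $\Cone \to \infty$ and then $\epsilon \to 0$ yields the claim. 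I do not anticipate a serious obstacle here: the substantive analytical work has already been carried out in Theorem \ref{thm:closed graph} and Corollary \ref{cor:closednessoffeasibleregion}, and what remains is a routine pairing of Hilbert projection onto a closed convex set with the density of $\uniqueplus$ in $\lts/\langle 1 \rangle$.
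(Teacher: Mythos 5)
Your proposal is correct and follows essentially the same route as the paper: both establish that $\uniqueplus^{\Cone}$ is nonempty, closed (via Corollary \ref{cor:closednessoffeasibleregion}), and convex, invoke the Hilbert projection theorem for existence and uniqueness, and deduce the convergence from the density of $\uniqueplus$ in $\lts/\langle 1\rangle$. Your explicit $\epsilon$-argument for the limit is just a slightly more detailed rendering of the paper's monotonicity-plus-density observation.
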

\begin{proof}
	From Corollary \ref{cor:closednessoffeasibleregion}, we know that $\uniqueplus^{\Cone}$ is a closed set. Furthermore, the set $\uniqueplus^{\Cone}$ is convex by linearity of $\ptm$ and non-empty because it contains the zero function. Hence, a minimizer for \eqref{eqn:BEP1}, %
	as the best approximant of $f$ from $\uniqueplus^{\Cone}$, uniquely exists by the Hilbert projection theorem.
	
	Since the set $\uniqueplus^{\Cone}$ enlarges with increasing $\Cone$, the remainder $\|\varphi_{\Cone}-f\|$ must decrease correspondingly. In consequence, for $f\in \lts/\langle1\rangle$, the limit of $\|\varphi_{\Cone}-f\|$, as $\Cone\to\infty$, must be zero by the density of $\uniqueplus$ in $\lts/\langle1\rangle$.
\end{proof}

\begin{cor}\label{cor:BEP1_P2}
	Let $\varphi\in \uniqueplus$ and $\Cone>0$ be a constant such that $\|\ptm(\varphi)\|\leq \Cone$. Furthermore, let $(f_n)_{n\in\bbN}\subset \lts$ be a sequence that converges to $\varphi$, and let $\varphi_{n,\Cone}$ denote the minimizer of \eqref{eqn:BEP1} corresponding to the choice $f=f_n$. Then, $\left(\ptm(\varphi_{n,\Cone})\right)_{n\in\bbN}$ weakly converges to $\ptm(\varphi)$ as $n\to\infty$.
\end{cor}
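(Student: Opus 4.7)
The plan is direct: exploit that the true $\varphi$ is itself admissible for every perturbed problem, deduce strong convergence of the minimizers, and then let Corollary~\ref{cor:alternative} do all the remaining work.

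First, the hypotheses $\varphi\in\uniqueplus$ and $\|\ptm(\varphi)\|\leq \Cone$ say precisely that $\varphi$ is feasible for \eqref{eqn:BEP1} with datum $f_n$, for every $n$. Consequently, the minimizing property of $\varphi_{n,\Cone}$ gives
\begin{equation*}
\|\varphi_{n,\Cone}-f_n\|\leq \|\varphi-f_n\|\longrightarrow 0\quad\text{as }n\to\infty,
\end{equation*}
and a triangle inequality upgrades this to $\varphi_{n,\Cone}\to \varphi$ in the norm of $\lts$. Simultaneously, the constraint built into \eqref{eqn:BEP1} forces $\|\ptm(\varphi_{n,\Cone})\|\leq \Cone$ uniformly in $n$, so the sequence $\bigl(\ptm(\varphi_{n,\Cone})\bigr)_{n\in\bbN}$ is bounded in $\lts$.

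Having assembled norm convergence of the inputs and uniform boundedness of their $\ptm$-images, I would invoke Corollary~\ref{cor:alternative} directly. It certifies that $\varphi\in\uniqueplus$ (already known here) and that $\ptm(\varphi_{n,\Cone})\rightharpoonup \ptm(\varphi)$, which is exactly the assertion of the corollary to be proved.

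The substantive content of the argument is therefore absorbed into Corollary~\ref{cor:alternative} and, behind it, into Theorem~\ref{thm:closed graph} on the weak closedness of the graph of $\ptm$: these together force every weak cluster point of the bounded sequence $\bigl(\ptm(\varphi_{n,\Cone})\bigr)_{n\in\bbN}$ to coincide with $\ptm(\varphi)$, and hence the full sequence to converge weakly by the standard subsequence-of-subsequence principle in a Hilbert space. I do not foresee any separate obstacle beyond carefully chaining these previously established tools; the only place where the threshold assumption $\Cone\geq\|\ptm(\varphi)\|$ is actually exploited is in the admissibility of $\varphi$ as a competitor in the first step.
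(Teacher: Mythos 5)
Your proposal is correct and follows the paper's own proof essentially verbatim: feasibility of $\varphi$ for each perturbed problem gives $\|\varphi_{n,\Cone}-f_n\|\leq\|\varphi-f_n\|$, hence $\varphi_{n,\Cone}\to\varphi$ in norm with $\|\ptm(\varphi_{n,\Cone})\|\leq\Cone$ uniformly, and Corollary~\ref{cor:alternative} then yields the weak convergence. No gaps.
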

\begin{proof}
	We first notice that $\varphi$ is in $\uniqueplus^{\Cone}$. Furthermore, by definition of \eqref{eqn:BEP1}, it holds that $\|\varphi_{n,\Cone}-f_n\|\leq\|\varphi -f_n\|$. Thus, $\|\varphi_{n,\Cone}-\varphi\|\leq2\|\varphi-f_n\|$, which means that $\left(\varphi_{n,\Cone}\right)_{n\in\bbN}$ converges to $\varphi$ as $n\to\infty$. Since, by construction, $\|\ptm(\varphi_{n,\Cone})\|$ is bounded by $\Cone$, Corollary \ref{cor:alternative}  yields that $\ptm(\varphi_{n,\Cone})$ weakly converges to $\ptm(\varphi)$ as $n\to\infty$.
\end{proof}

\begin{rem} \label{rem:equvalentDefinition}
	We could have equivalently formulated \eqref{eqn:BEP1} in the following way:  Find $(\varphi,\psi)$ in $G(\ptm)$, where $G(\ptm)$ denotes the graph of $\ptm$, such that $\| P_2(\varphi,\psi)\|\leq \Cone$ and
	\begin{equation*}\label{eqn:BEP1b}
		\begin{aligned}
			 \| P_1(\varphi,\psi)- f\|=\inf\left\{\| P_1(g,h)- f\|:\,(g,h)\in G(\ptm),\,\| P_2(g,h)\|\leq \Cone\right\}.
		\end{aligned}
	\end{equation*}
	By $P_1$ and $P_2$ we denote the canonical projections acting via $P_1(g,h)=g$ and $P_2(g,h)=h$, respectively. Since we know that $G(\ptm)$ is a closed subspace of the Hilbert space $\lts \times \lts$ and $P_1$ and $P_2$ are coprime in the sense of \cite{Chalendar2003}, the two previous corollaries would also follow by similar arguments used for other BEPs. %
\end{rem}

Corollary \ref{cor:BEP1_P2} implies that if we have a bound on $\|\ptm(\varphi)\|$ and an approximating sequence $(f_n)_{n\in\bbN}$ for $\varphi$%
, we can hope to reconstruct $\ptm(\varphi)$ in the weak sense to an acceptable precision. Nonetheless, because $\ptm$ is unbounded, an upper bound on $\|\ptm(\varphi)\|$ reflects crucial prior information required for the solution of the problem. We can further analyze the weak convergence and the influence of the upper bound with the help of the adjoint $\ptm^{\ast}$ and a related bounded extremal problem as we show in Appendix \ref{app:opsextremalprobs}. This setup requires the explicit evaluation of $\ptm$ and possibly $\ptm^*$. However, the specific structure of these operators allows us to formulate another bounded extremal problem that does not require their explicit evaluation; in particular, no evaluation of the operators $[P(K\pm\tfrac{1}{2})]^{-1}$ is required. This is discussed in the next section and addresses the quantification of weak convergence slightly different from Appendix \ref{app:opsextremalprobs}.

\subsection{Approximation of the modes via a second bounded extremal problem}\label{sec:bep2}

Inspired by the weak convergence features of \eqref{eqn:BEP1}, in this section, we consider a fairly related problem of approximating certain modes of $\ptm(\varphi)$, namely, $\langle \ptm(\varphi),\dualelement\rangle$ with $\dualelement$ being fixed, e.g., a spherical harmonic. This leads to the problem of finding an adequate function $h$ in $\lts$ such that
\begin{align}
	\langle \ptm(\varphi),\dualelement\rangle=\langle \varphi,h\rangle,\qquad\textnormal{for all }\varphi\in\uniqueplus.\label{eqn:goalmode}
\end{align}
Equality in \eqref{eqn:goalmode} is unlikely to be achieved in general, unless $\dualelement$ is in $\dom{\ptm^*}$, in which case one can choose $h=\ptm^*(\dualelement)$. Since, however, $\dom{\ptm^*}$ is dense but not closed in $\lts$ we cannot guarantee that our choice of $\dualelement$ will satisfy this condition. What we can hope for is an approximate solution for \eqref{eqn:goalmode}, which is linked to the method of approximate inverse from \cite{Louis1996,LouMas90} that has been applied to related inverse magnetization problems in a modified form, e.g., in \cite{barchehar18,baratchartgerhards16}. The particular structure of $\ptm$ and $\ptm^*$ leads us to consider the following bounded extremal problem.

\paragraph{Bounded Extremal Problem 2.} Let $E$ denote the projection from $\lts$ onto $\divfreezeroset$. Given a $\dualelement$ in $\lts$ and a fixed $\Ctwo >0$, find an $h$ in $L^2(\zeroset)$ with $\|h\|\leq \Ctwo$ such that
\begin{equation}\label{eqn:BEP3}\tag{$BEP_{2}$}
	\begin{aligned}
		\| E\dualelement- E(K+\tfrac{1}{2} \id)Ph\|=\inf\left\{\| E\dualelement- E(K+\tfrac{1}{2} \id)Pg\|:\,g\in L^2(\zeroset),\,\|g\|\leq \Ctwo\right\}.
	\end{aligned}
\end{equation}

We know that $E\left(K + \tfrac{1}{2} \id \right)\proj$ is a bounded linear operator from $\lts$ into $\lts$, and its adjoint is given by $\proj \left(K + \tfrac{1}{2} \id \right)E$. The latter operator is, by Theorem \ref{prop:dense1}, injective from  $\divfreezeroset$ into $L^2(\zeroset)$ with dense range. Then we can characterize the solution of \eqref{eqn:BEP3} following the statement of constrained approximation in Hilbert spaces from \cite[Lemma 2.1, Theorem 2.1]{Chalendar2003}.%
\begin{prop}\label{cor:BEP3_P1}
	Let \( \zeroset \Subset \SS \) be open. A minimizer of \eqref{eqn:BEP3} exists and is unique. Moreover, let $h_{\Ctwo}$ be the solution of \eqref{eqn:BEP3}, and set $M=E\left(K + \tfrac{1}{2} \id \right)\proj$. The constraint is saturated, i.e. $\|h_{\Ctwo}\|= \Ctwo$, if $E\dualelement$ is not in the range of $M$. In this case, $h_{\Ctwo}$ satisfies
	\begin{equation}\label{eqn:solutionBEP3}
		(M^{\ast}M-\gamma \id)h_{\Ctwo}=M^{\ast}(E\dualelement),
	\end{equation}
	where $\gamma<0$ is the unique constant such that $\|h_{\Ctwo}\|= \Ctwo$ .
\end{prop}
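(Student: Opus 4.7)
The plan is to recognize Proposition \ref{cor:BEP3_P1} as an instance of the Chalendar--Partington constrained approximation framework that the authors cite, so that the work reduces to checking the structural ingredients required by that framework. Set $M := E(K+\tfrac{1}{2}\id)P$ and record two consequences of Theorem \ref{prop:dense1}: first, $M$ is bounded as a composition of bounded operators; second, the adjoint $M^\ast = P(K+\tfrac{1}{2}\id)E$ is injective on $\divfreezeroset$ with range dense in $L^2(\zeroset)$, so by duality $M$ itself is injective on $L^2(\zeroset)$ with range dense in $\divfreezeroset$.

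Existence of the minimizer follows from the standard weak-compactness argument: the feasible set $\{g\in L^2(\zeroset): \|g\|\leq \Ctwo\}$ is a closed, bounded, convex subset of a Hilbert space, hence weakly compact, and $g\mapsto \|E\dualelement-Mg\|$ is convex, continuous, and weakly lower semicontinuous. Uniqueness combines strict convexity of $g\mapsto \|E\dualelement-Mg\|^2$ with the injectivity of $M$: if $g_1$ and $g_2$ both minimize, then so does their midpoint, and the parallelogram identity forces $Mg_1=Mg_2$, hence $g_1=g_2$. For the saturation assertion I argue by contradiction. If $\|h_{\Ctwo}\|<\Ctwo$, then $h_{\Ctwo}$ lies strictly inside the feasible ball, so the Gâteaux derivative of the objective vanishes at $h_{\Ctwo}$, which gives $M^\ast(Mh_{\Ctwo}-E\dualelement)=0$, i.e.\ $Mh_{\Ctwo}-E\dualelement\in \ker M^\ast$. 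Since $\ker M^\ast$ equals the orthogonal complement of the range of $M$ in $\lts$, and since this range is dense in $\divfreezeroset$, the difference is orthogonal to $\divfreezeroset$. But $Mh_{\Ctwo}$ and $E\dualelement$ both lie in $\divfreezeroset$, so $Mh_{\Ctwo}=E\dualelement$, contradicting the assumption $E\dualelement \notin M(L^2(\zeroset))$.

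For the Euler--Lagrange identity I invoke the Lagrange multiplier principle for convex minimization on a ball of a Hilbert space (Slater's condition holds trivially, as $g=0$ is strictly interior). Because the constraint is active, there is a multiplier $\mu\geq 0$ with $(M^\ast M+\mu I)h_{\Ctwo}=M^\ast E\dualelement$; setting $\gamma:=-\mu$ yields the stated equation. The strict inequality $\gamma<0$ follows because $\mu=0$ would give $M^\ast Mh_{\Ctwo}=M^\ast E\dualelement$, and the argument of the previous paragraph would once again force $E\dualelement=Mh_{\Ctwo}$, contrary to hypothesis. For the uniqueness of $\gamma$ I observe that whenever $\gamma<0$, the operator $M^\ast M-\gamma I$ is strictly positive and hence boundedly invertible, so $h(\gamma):=(M^\ast M-\gamma I)^{-1}M^\ast E\dualelement$ is well defined; a short spectral-calculus computation then shows that $\gamma\mapsto\|h(\gamma)\|$ is strictly monotone on $(-\infty,0)$, so at most one $\gamma<0$ satisfies $\|h(\gamma)\|=\Ctwo$.

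The main obstacle is the Lagrange-multiplier step in infinite dimensions: one must either cite the appropriate convex-optimization theorem (which is essentially what \cite{Chalendar2003} packages for this family of bounded extremal problems) or rederive the KKT system by a direct perturbation argument along feasible variations. Once the optimality condition is secured, the remaining claims reduce to elementary Hilbert-space and spectral-calculus arguments, and the whole proof should fit in a compact paragraph referring back to Theorem \ref{prop:dense1} and \cite{Chalendar2003}.
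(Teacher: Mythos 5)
Your proposal is correct and follows essentially the same route as the paper: the paper verifies that $M=E\left(K+\tfrac{1}{2}\id\right)\proj$ is bounded and that $M^{\ast}=\proj\left(K+\tfrac{1}{2}\id\right)E$ is injective on \( \divfreezeroset \) with dense range (via Theorem \ref{prop:dense1}), and then simply invokes \cite[Lemma 2.1, Theorem 2.1]{Chalendar2003} for existence, uniqueness, saturation, and the critical-point equation \eqref{eqn:solutionBEP3}. You carry out the same verification and additionally unpack the cited constrained-approximation result (weak compactness, strict convexity, the KKT multiplier, and the spectral-calculus monotonicity of \( \gamma\mapsto\|h(\gamma)\| \)), all of which is sound.
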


We can now state the main theorem of this section, quantifying the approximation error for $\langle \ptm(\varphi),\dualelement\rangle$ by using  \eqref{eqn:BEP3}.

\begin{thm}
	Let \( \zeroset \Subset \SS \) be open and $\dualelement\in\lts$ and $\Ctwo>0$ be fixed. Furthermore, let $h_{\Ctwo}$ be the solution to \eqref{eqn:BEP3} and define $L_{\dualelement}(\Ctwo)=\|E\dualelement-E\left(K + \tfrac{1}{2} \id \right)\proj h_{\Ctwo}\|$. Then, it holds for any $\varphi \in \uniqueplus$ and any noisy version $\varphi_\eps\in\lts$, with $\|\varphi-\varphi_\eps\|<\eps$, that
	\begin{equation} \label{eqn:errorestimationBEP3_2}
		\left|\langle \ptm(\varphi), \dualelement\rangle -\left(\langle \proj\varphi_{\varepsilon}, h_{\Ctwo}\rangle-\langle  \varphi_{\varepsilon},\dualelement\rangle\right)\right|\leq (\|\varphi\|+\|\ptm(\varphi)\|)L_{\dualelement}(\Ctwo)+\varepsilon(\Ctwo+\|\dualelement\|),
	\end{equation}
	where $L_{\dualelement}(\Ctwo)$ decreases monotonically to zero as $\Ctwo\to\infty$.
\end{thm}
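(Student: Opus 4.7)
I would first rewrite $\langle \ptm(\varphi),\dualelement\rangle$ in a form that, up to a controllable error, only involves the accessible quantities $\proj\varphi$, $\varphi$, $h_{\Ctwo}$, and $\dualelement$, then absorb the noise by two Cauchy--Schwarz estimates, and treat the behaviour of $L_{\dualelement}(\Ctwo)$ separately. For the algebraic core, I would start from the definition of $\uniqueplus$: there is a $g\in\divfreezeroset$ with $\proj\varphi = \proj(K+\tfrac{1}{2}\id)g$, unique by the injectivity half of Theorem \ref{prop:dense1}. From Definition \ref{def:ptm} we then have $\ptm(\varphi)=g-\varphi$, so $\|g\|\leq \|\varphi\|+\|\ptm(\varphi)\|$ and
\[
\langle \ptm(\varphi),\dualelement\rangle \;=\; \langle g,\dualelement\rangle-\langle \varphi,\dualelement\rangle \;=\; \langle g, E\dualelement\rangle-\langle \varphi,\dualelement\rangle,
\]
where the last equality uses $Eg=g$. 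The key step I would perform is the identity
\[
\langle g,\, E(K+\tfrac{1}{2}\id)\proj h_{\Ctwo}\rangle \;=\; \langle \proj(K+\tfrac{1}{2}\id)g,\, h_{\Ctwo}\rangle \;=\; \langle \proj\varphi,\, h_{\Ctwo}\rangle,
\]
obtained by successively shifting the self-adjoint operators $E$, $K+\tfrac{1}{2}\id$, $\proj$ to the other side of the pairing and using $Eg=g$ together with the defining property of $g$. Combined with Cauchy--Schwarz and $L_{\dualelement}(\Ctwo)=\|E\dualelement-E(K+\tfrac{1}{2}\id)\proj h_{\Ctwo}\|$, this gives
\[
\bigl|\langle \ptm(\varphi),\dualelement\rangle - \bigl(\langle \proj\varphi, h_{\Ctwo}\rangle-\langle \varphi,\dualelement\rangle\bigr)\bigr| \;\leq\; \|g\|\,L_{\dualelement}(\Ctwo) \;\leq\; (\|\varphi\|+\|\ptm(\varphi)\|)\,L_{\dualelement}(\Ctwo).
\]

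To handle the noise, I would apply the triangle inequality and Cauchy--Schwarz twice, bounding $|\langle \proj(\varphi-\varphi_{\varepsilon}),h_{\Ctwo}\rangle|\leq \varepsilon\|h_{\Ctwo}\|\leq \varepsilon\Ctwo$ and $|\langle \varphi-\varphi_{\varepsilon},\dualelement\rangle|\leq \varepsilon\|\dualelement\|$, using $\|\proj\|\leq 1$ and $\|h_{\Ctwo}\|\leq \Ctwo$. Adding these two contributions to the previous estimate yields the announced bound.

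For the behaviour of $L_{\dualelement}$, monotone decrease is immediate since the feasible set in \eqref{eqn:BEP3} grows with $\Ctwo$. For the limit zero, I would view $M:=E(K+\tfrac{1}{2}\id)\proj$ as a bounded operator from $L^{2}(\zeroset)$ into $\divfreezeroset$ and note that its adjoint is the restriction $\proj(K+\tfrac{1}{2}\id)\vert_{\divfreezeroset}$, which is injective by Theorem \ref{prop:dense1}. Hence $M$ has dense range in $\divfreezeroset$, and since $E\dualelement\in\divfreezeroset$ the unconstrained infimum $\inf_{h\in L^{2}(\zeroset)}\|E\dualelement-Mh\|$ is zero; monotonicity then forces the constrained infima to tend to zero as $\Ctwo\to\infty$. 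The only real obstacle I anticipate is the bookkeeping in the key algebraic identity above; the density step is the conceptually essential ingredient but is a one-line corollary of Theorem \ref{prop:dense1}.
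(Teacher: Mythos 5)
Your proposal is correct and follows essentially the same route as the paper's proof: the identity $\ptm(\varphi)=g-\varphi$ with $g=Eg\in\divfreezeroset$ satisfying $\proj\varphi=\proj(K+\tfrac{1}{2}\id)g$, the transfer of the self-adjoint operators $E$, $K+\tfrac{1}{2}\id$, $\proj$ across the pairing to produce $\langle\proj\varphi,h_{\Ctwo}\rangle$ plus a remainder bounded by $\|g\|\,L_{\dualelement}(\Ctwo)$, and the two Cauchy--Schwarz noise estimates $\varepsilon\Ctwo$ and $\varepsilon\|\dualelement\|$. Your justification that $L_{\dualelement}(\Ctwo)\to 0$ (via injectivity of $M^{\ast}=\proj(K+\tfrac{1}{2}\id)E$ on $\divfreezeroset$ from Theorem \ref{prop:dense1}, hence density of the range of $M$ in $\divfreezeroset$) is slightly more explicit than the paper's one-line appeal to density, but it is the same argument.
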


\begin{proof}
	The monotonic decrease of $L_{\dualelement}(\Ctwo)$ to zero follows from the density of the range of $E \left(K + \tfrac{1}{2} \id \right)P$ in $\divfreezeroset$.

Since $\varphi$ is in $\uniqueplus$, there is a unique function $g=Eg$ in $\divfreezeroset$ such that $\proj \varphi = \proj(K + \tfrac{1}{2} \id) g$ and $\ptm(\varphi)=g-\varphi$, which yields
	\begin{align}\label{eq: ptm as difference}
		\langle \ptm(\varphi), \dualelement\rangle&=\langle g, E\dualelement\rangle-\langle \varphi, \dualelement\rangle.
	\end{align}
	Furthermore, we get
	\begin{align}\label{eq: weak equality for E}
		\langle g, E\dualelement\rangle&=\langle g, E\dualelement-E\left(K + \tfrac{1}{2} \id \right)\proj h_{\Ctwo}\rangle+\langle g, E\left(K + \tfrac{1}{2} \id \right)\proj h_{\Ctwo}\rangle\nonumber
		\\&=\langle g, E\dualelement-E\left(K + \tfrac{1}{2} \id \right)\proj h_{\Ctwo}\rangle+\langle \proj\left(K + \tfrac{1}{2} \id \right)g, h_{\Ctwo}\rangle
		\\&=\langle g, E\dualelement-E\left(K + \tfrac{1}{2} \id \right)\proj h_{\Ctwo}\rangle+\langle \proj\varphi, h_{\Ctwo}\rangle\nonumber
	\end{align}
	Combining \eqref{eq: ptm as difference} and \eqref{eq: weak equality for E}, we obtain for the remainder $\langle \ptm(\varphi),\dualelement\rangle -\left(\langle \proj\varphi_{\varepsilon}, h_{\Ctwo}\rangle-\langle  \varphi_{\varepsilon},\dualelement\rangle\right)$,
	\begin{align}
		\langle \ptm(\varphi), \dualelement\rangle &-\left(\langle \proj\varphi_{\varepsilon}, h_{\Ctwo}\rangle-\langle  \varphi_{\varepsilon},\dualelement\rangle\right) = \langle g, E\dualelement\rangle-\langle \varphi, \dualelement\rangle-\left(\langle \proj\varphi_{\varepsilon}, h_{\Ctwo}\rangle-\langle  \varphi_{\varepsilon},\dualelement\rangle\right)\nonumber
		\\&=\langle g, E\dualelement-E\left(K + \tfrac{1}{2} \id \right)\proj h_{\Ctwo}\rangle +\langle \proj(\varphi- \varphi_\eps), h_{\Ctwo}\rangle +\langle  	\varphi_{\varepsilon}- \varphi, \dualelement\rangle.\label{eqn:tptmvarphi}
	\end{align}
	The last term in \eqref{eqn:tptmvarphi} is bounded by $\eps\|\dualelement\|$, the middle term by $\eps \Ctwo$, and the first term by $\|g\|L_{\dualelement}(\Ctwo)$. Since $\|g\|=\|\varphi+\ptm(\varphi)\|\leq \|\varphi\|+\|\ptm(\varphi)\|$, the desired estimate follows.
\end{proof}

\begin{rem} \label{rem:ontwoBEPs}
	Although \eqref{eqn:BEP1} and \eqref{eqn:BEP3} both relate to the problem of reconstructing $\ptm(\varphi)$ in the weak sense, they might be suitable for different situations. For example, \eqref{eqn:BEP1} is suitable if one can find a good set of basis functions $\varphi_{n,c}$ in $\uniqueplus$ for which one is able to compute $\ptm(\varphi_{n,c})$. Opposed to this, \eqref{eqn:BEP3} does not require the possibly difficult evaluation of $\ptm$ and it is independent of the actual data. But it only provides information on single modes of $\ptm(\varphi)$.
\end{rem}

\bibliography{biblio}
\bibliographystyle{plain}

\paragraph{Acknowledgments.} The authors have been partially funded by BMWi (Bundesministerium f\"ur Wirtschaft und Energie) within the joint project 'SYSEXPL -- Systematische Exploration', grant ref. 03EE4002B.

\appendix

\section{Appendix}
\subsection{Hardy spaces}\label{sec:hardyspace}
We briefly recapitulate the classical definition of Hardy spaces as mentioned, e.g., in \cite{stein1960}. Letting $\textnormal{Harm}(\BB)=\{u\in C^\infty(\BB): \Delta u=0 \textnormal{ in }\BB\}$ and $\textnormal{Harm}(\R^d\setminus\overline{\BB})=\{u\in C^\infty(\R^d\setminus\overline{\BB}): \Delta u=0 \textnormal{ in }\R^d\setminus\overline{\BB},\,\lim_{|x|\to\infty}|u(x)|=0\}$ be the spaces of harmonic functions on the corresponding domains, we set
\begin{align*}
H_+(\BB)&=\left\{\grad u:u\in \textnormal{Harm}(\BB),\,\sup_{r\in[0,1)}\int_{\SS_r}|\grad u(y)|^2d\omega(y)<\infty\right\},
\\H_-(\R^d\setminus\overline{\BB})&=\left\{\grad v:v\in \textnormal{Harm}(\R^d\setminus\overline{\BB}),\,\sup_{r\in(1,\infty)}\int_{\SS_r}|\grad v(y)|^2d\omega(y)<\infty\right\}.
\end{align*}
The nontangential limits to the sphere of any vector fields in $H_+(\BB)$ or $H_-(\R^d\setminus\overline{\BB})$ are known to lie in  $L^2(\SS,\R^d)$. The space of boundary fields obtained by taking the non-tangential limits define the Hardy spaces on the sphere, such that
\begin{align*}
H_+(\SS)&=\{\f \in \glts \colon \ \f (y) = \operatorname*{nt-lim}_{\BB\ni x \to y} \nabla u (x) \text{ at almost every \( y \in \SS \)}, \nabla u \in H_+(\BB)\},
\\H_-(\SS)&=\{\g \in \glts \colon \ \g(y) = \operatorname*{nt-lim}_{\R^d\setminus\overline{\BB}\ni x \to y} \nabla u (x)  \text{ at almost every \( y \in \SS \)}, \nabla u  \in H_-(\R^d\setminus\overline{\BB})\},
\end{align*}
where \( \operatorname*{nt-lim} \) denotes the non-tangential limits to the sphere.
It has been shown in \cite{bargerkeg20} that the definition of Hardy spaces that we use in this paper (via the operators $B_+$ and $B_-$ from \eqref{cross_s}) are equivalent to the standard definition of Hardy spaces as presented above.

\subsection{Extension of locally divergence-free fields}\label{sec:cont}

In the following, if \( \zeroset \) is open, then \( \sob \) and \( \sobzero \) will denote the Sobolev spaces on \( \zeroset \) as defined in Section \ref{sec:intro}. Similarly, for the open set \( \zeroset \) we will write \( \sobspace{1,2}{\zeroset} \) to refer to the Sobolev space of degree one in the \( \zeroset \). Then, \( \sobpartialzero \) will denote the Sobolev space on the boundary of \( \zeroset \), and \( \dsobpartialzerohalf \) will be the dual space of \( \sobpartialzerohalf \). For  \( \varphi \in \dsobpartialzerohalf \) and \( \phi \in \sobpartialzerohalf \) we will denote their dual paring by \( \dualp{ \varphi, \phi} \). For better clarity, we will put the index on the dual paring and the scalar product to indicate the domain of functions involved in that operation.
In this section, the mapping \( \trace \colon \sobspace{1,2}{\zeroset} \to \sobpartialzerohalf\) will denote the trace from Sobolev functions on \( \zeroset \) to Sobolev functions on \( \partial \zeroset \). If \( \zeroset \) is Lipschitz, the trace between the indicated spaces is continuous and onto. Moreover, it has a continuous right inverse \( E \colon \sobpartialzerohalf \to \sobspace{1,2}{\zeroset}\), such that for every \( \varphi \in \sobpartialzerohalf \) we have \( \trace \circ E (\varphi) = \varphi \).

\begin{lem}\label{lem:continuationhelmholtz}
	Let \( \zeroset\subset \SS \) be a Lipschitz domain with connected boundary \( \partial \zeroset \). Then, for every \( \f \in \glts \) with \( \divs(\f)\vert_{\zeroset} = 0 \), there exists a \( \g \in \Divfree \) with \( \g = \f \) on \( \zeroset\).
\end{lem}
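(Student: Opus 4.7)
The plan is to keep $\f$ unchanged on $\Sigma$ and to extend it across $\partial\Sigma$ by the surface gradient of a harmonic function on $\Sigma' := \SS \setminus \overline{\zeroset}$, chosen so that the normal traces from both sides cancel in the distributional sense. Since $\partial\zeroset$ is connected and $\zeroset$ is a Lipschitz domain, $\Sigma'$ is itself a Lipschitz domain on $\SS$ with $\partial\Sigma'=\partial\zeroset$. Denote by $\nu$ the co-normal to $\partial\zeroset$ pointing out of $\zeroset$ (so $\nu'=-\nu$ is the outer co-normal of $\Sigma'$).

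First, because $\f\vert_\zeroset \in L^2(\zeroset,\R^d)$ is tangent on $\zeroset$ and $\divs(\f)\vert_\zeroset = 0$, the field $\f\vert_\zeroset$ lies in the spherical version of $H(\divs,\zeroset)$, so the standard trace theorem (e.g.\ \cite{fabmenmit98} adapted to the sphere, or a local chart argument) yields a well-defined normal trace $\gamma_\nu \f \in H^{-1/2}(\partial\zeroset)$ together with the integration-by-parts identity
\begin{equation*}
\scalp{\gamma_\nu \f,\trace\psi}_{\partial\zeroset} \;=\; \int_\zeroset \f\cdot\grads\psi \,d\omega
\qquad \big(\psi\in H^1(\zeroset)\big).
\end{equation*}
Testing with $\psi\equiv 1$ gives $\scalp{\gamma_\nu \f, 1}_{\partial\zeroset}=0$. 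Next, on the Lipschitz domain $\Sigma'$, solve the surface Neumann problem
\begin{equation*}
\Delta_\SS \phi = 0 \text{ on } \Sigma', \qquad \partial_{\nu'}\phi = -\gamma_\nu \f \text{ on } \partial\Sigma',
\end{equation*}
which is well-posed (and uniquely solvable up to an additive constant) in $H^1(\Sigma')$ by Lax--Milgram, the mean-zero compatibility condition being exactly the identity just derived.

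Define $\g\in\glts$ by $\g:=\f$ on $\zeroset$ and $\g:=\grads_\SS\phi$ on $\Sigma'$. Both pieces are tangent and square-integrable, so $\g$ is a tangent $L^2$-field on $\SS$. To check that $\divs\g = 0$ as a distribution on the whole sphere, fix $\psi\in C^\infty(\SS)$ and compute
\begin{equation*}
-\scalp{\grads\psi,\g}_\glts
= -\int_\zeroset \f\cdot\grads\psi\,d\omega - \int_{\Sigma'}\grads_\SS\phi\cdot\grads\psi\,d\omega.
\end{equation*}
The first integral equals $-\scalp{\gamma_\nu \f,\psi}_{\partial\zeroset}$ by the trace identity above, and the second, integrated by parts using $\Delta_\SS\phi=0$, equals $-\scalp{\partial_{\nu'}\phi,\psi}_{\partial\Sigma'} = \scalp{\gamma_\nu \f,\psi}_{\partial\zeroset}$ by the boundary condition on $\phi$. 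The two boundary contributions cancel, so $\divs\g = 0$ on $\SS$ and $\g\in\Divfree$ with $\g=\f$ on $\zeroset$.

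The main technical obstacle is justifying the normal trace $\gamma_\nu\f \in H^{-1/2}(\partial\zeroset)$ and the associated Green's formula in the purely intrinsic surface setting, since $\f$ is only $L^2$ and $\partial\zeroset$ is merely Lipschitz; once this $H(\divs)$-trace machinery on a Lipschitz subdomain of the sphere is in place (it can be reduced to the Euclidean case by local charts and a partition of unity, using that $\zeroset$ and $\Sigma'$ are both Lipschitz thanks to the connectedness of $\partial\zeroset$), the solvability of the Neumann problem and the gluing argument proceed by standard variational means.
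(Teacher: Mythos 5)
Your proposal is correct and follows essentially the same route as the paper's proof: define the distributional normal trace of \( \f \) on \( \partial\zeroset \) via Green's formula, observe the mean-zero compatibility, solve the Neumann problem for a surface-harmonic function on \( \SS\setminus\overline{\zeroset} \) by Lax--Milgram, and glue, with the two boundary contributions cancelling in the distributional divergence. The only cosmetic difference is that you invoke the \( H(\divs) \)-trace theorem explicitly, while the paper defines the normal trace directly through the pairing with an extension operator.
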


\begin{proof}
Since \( \f \) is divergence-free on \( \zeroset \), it has a unique normal \( \nn \cdot \f \) on \( \partial \zeroset \) as a distribution in %
\( \dsobpartialzerohalf \) defined by the Green's formula (similar to \cite[Thm. 4.4]{mclean2000strongly}),
\begin{align}\label{eqn:a5}
	\dualp{\nn \cdot \f , \varphi}_{\partial \zeroset} = \scalp{ \f , \grads (E\varphi)}_{\zeroset}
	\qquad \big(\varphi \in \sobpartialzerohalf\big).
\end{align}
By $\nn(x)$ we denote the unit vector field pointing outward of $\Sigma$, and that is tangential to the sphere $\SS$ and normal to the boundary $\partial\zeroset$ at almost every point $x\in\partial\zeroset$. The distribution \( \nn \cdot \f \) has vanishing mean since \( ( \nn \cdot \f, 1 ) = 0 \). Since \( \SS \setminus \overline{\zeroset} \) is also a connected Lipschitz domain if \( \zeroset \) is a Lipschitz domain with connected boundary, the Lax-Milgram lemma then guarantees the existence of a surface harmonic function $h$ in $W^{1,2}(\SS \setminus \overline{\zeroset})/\langle1\rangle$ such that
\begin{align}\label{eqn:a6}
	\langle \grads h,\grads \varphi\rangle_{\SS \setminus \overline{\zeroset}}=	\dualp{-\nn \cdot \f , \trace(\varphi)}_{\partial \zeroset} \qquad \big(\varphi \in W^{1,2}(\SS \setminus \overline{\zeroset})/\langle1\rangle\big),
\end{align}
i.e. 
whose Neumann boundary values $-\nn\cdot\grads h$ coincide with $-\nn\cdot\f$ on $\partial\zeroset$ (understood in the \( \dsobpartialzerohalf \)-sense since \( ( \nn \cdot \f, 1 ) = 0 \); the right hand side of \eqref{eqn:a6} is defined via \eqref{eqn:a5} as the trace \( \trace(\varphi) \) is a function in \( \sobpartialzerohalf \); the minus sign before $\nn$ appears because the outward-pointing normal with respect to \( \SS \setminus \zeroset \) is \( -\nn \)). %
Now, we set
\begin{align}
    \g =
	\begin{cases}
		\f & \text{on } \zeroset \\
		\grads h & \text{on } \SS \setminus \overline{\zeroset}.
\end{cases}
\end{align}
Then, \( \g \) is a square integrable tangent vector field on \( \SS \), and it is divergence-free on the whole sphere since for every test function \( \phi \in C^{\infty}_{0} (\SS) \) we have by the definition of the normal trace
\begin{align*}
    \scalp{\g, \grads \phi}_{\SS}
	= \scalp{ \f , \grads \phi}_{\zeroset} + \scalp{ \grads h , \grads \phi}_{\SS\setminus \overline{\zeroset}}
	&= \dualp{ \nn \cdot \f , \trace(\phi) }_{\partial \zeroset} - \dualp{\nn \cdot \grads h, \trace (\phi)}_{\partial \zeroset}
	= 0.
\end{align*}
Thus, \( \g \) is the desired globally divergence-free vector field.
\end{proof}

\subsection{Proof of Lemma \ref{eq:orthogonal complement}}\label{sec:prflemma}

\begin{proof}%
Let \( \zeroset \subset \SS \) be open with \( \SS\setminus \zeroset\) having positive Lebesgue measure. First, we observe that we can phrase \( \divfreezeroset \) as the orthogonal complement of the set
\begin{align}\label{eq:def G}
    G
	= \big\{
		(\grads S)^{\ast} \grads \varphi \colon \varphi \in \cc(\zeroset)
		\big\} \subset \lts.
\end{align}
This is because for every \( f \in \lts \)  we have
\begin{align}\label{eq:Q as complement}
		\divs(\grads S f)(\varphi)
		= \scalp{\grads S f, \grads \varphi}
		= \scalp{ f, (\grads S)^{\ast} \grads \varphi }
	\qquad \big( \varphi \in \cc(\zeroset) \big).
\end{align}
Thus, \( f \) is in \( \divfreezeroset \) (left-hand side of \eqref{eq:Q as complement} vanishes) if and only if \( f \) is orthogonal to \( G \) (right-hand side of \eqref{eq:Q as complement} vanishes). In particular, \( G \) is orthogonal to constant functions and thus, it is a subset of \( \lts \slash \scalp{1} \).
The orthogonal complement of \( \divfreezeroset \) is then the \( L^{2} \)-closure of \( G \),
\begin{align}
    \divfreezeroset^{\perp} = (G^{\perp})^{\perp} = \overline{G}.
\end{align}
Moreover, inserting the identity \( SS^{-1} = I \colon \sob \to \sob \) in \eqref{eq:def G} and using \eqref{eq:single-double layer} we can rewrite \( G \) as
\begin{align}\label{eq:set G}
    G
		= \big\{
				(\grads S)^{\ast} (\grads S) S^{-1} \varphi
				\colon \varphi \in \cc(\zeroset)
			\big\}
		= \big\{
				\big( K + \tfrac{1}{2}I \big)\big( K - \tfrac{1}{2} I \big)
				S^{-1}\varphi
				\colon \varphi \in \cc(\zeroset)
			\big\}.
\end{align}
To prove the assertion of the lemma, it remains to show that the closure of \( G \) is given by
\begin{align}
    H
		= \big\{
			\big( K + \tfrac{1}{2}I \big) \big( K - \tfrac{1}{2}I \big)
			S^{-1} \varphi \colon \varphi \in \sobzero
			\big\}.
\end{align}
For this assume a Cauchy sequence  \( (f_{n})_{n\in\bbN} \) in \( G \) that converges to \( f \in \lts \); for each \( f_{n} \) let \( \varphi_{n} \in \cc(\zeroset) \) be the corresponding smooth function as in \eqref{eq:set G} with the mean \( c_{n} = \scalp{\varphi_{n},1} \).
  Since \( (f_{n})_{n\in\bbN} \) is Cauchy and the operators \( K \pm \tfrac{1}{2}I \) are both isomorphisms on \( \lts \slash \scalp{1} \), the sequence of functions \( S^{-1}\varphi_{n} \) is Cauchy in the space \( \lts  \).
Correspondingly, the sequence of functions \( S^{-1}\varphi_{n} - S^{-1}c_{n} \) converges in \( \lts \slash \scalp{1} \) (since \( S^{-1}c_{n} \) is the mean of \( S^{-1}\varphi_{n} \) due to the fact that \( S \) preserves constant functions).
Because \( S \colon \lts \to \sob \) is bounded, it further follows that the sequence \( (\varphi_{n} - c_{n} )_{n\in\bbN}\) must be Cauchy in \( \sob \).
In particular, it converges in the \( L^{2} \)-sense on every subdomain of \( \SS \) that has positive measure --- especially on \( \SS \setminus\zeroset \), where it is a sequence of constant functions \( c_{n} \) (since each \( \varphi_{n} \) is compactly supported on \( \zeroset \)). Clearly, \( (c_{n})_{n\in\bbN} \) must then also converge on the entire sphere in the \( L^{2} \)-sense and in the \( W^{1,2} \)-sense. It follows that both sequences, \( (\varphi_{n} - c_{n} )_{n\in\bbN}\)  and \( (c_{n})_{n\in\bbN} \), are individually Cauchy in \( \sob \). Correspondingly, the sequence of functions \( \varphi_{n} = (\varphi_{n}-c_{n}) + c_{n}\) converges in \( \sob \) and its limit is a function \( \varphi_{0} \) in \( \sobzero \) (since by  definition \( \sobzero \) is the closure of \( \cc(\zeroset) \) in the Sobolev norm).
Moreover, we have
\(	\| \big( K + \tfrac{1}{2}I \big) \big( K - \tfrac{1}{2}I \big) S^{-1} \varphi_{0} - f_{n} \| \to 0\) for \( n \to \infty \), and thus,
\begin{align}
        f = \big( K + \tfrac{1}{2}I \big) \big( K - \tfrac{1}{2}I \big) S^{-1} \varphi_{0},
\end{align}
implying that \( \overline{G} \subset H \). The inverse inclusion is trivial, since all involved operators are isomorphic on the corresponding spaces. Thus, \( \overline{G} = H \) which concludes the proof.
\end{proof}

\subsection{Properties of $\ptm^*$, $\mtp^*$, and a quantification of weak convergence}\label{app:opsextremalprobs}
From the standard theory of operators we know that
the adjoint operators of densely defined operator are always closed (e.g., \cite[thm. II.2.6]{goldberg2006unbounded}).
Since the operators \( \ptm \) and \( \mtp \) are densely defined, closed, injective and have a dense range, their adjoints have the same properties.
Moreover, also for unbounded operators we have \( (A^{\ast})^{-1} = (A^{-1})^{\ast} \) (if the unbounded operator \( A \) has an (unbounded) inverse \( A^{-1} \)). Therefore, by Proposition \ref{prop:analysis of ptm} we have
\begin{align}
	\mtp^{\ast}
	= (\ptm^{-1})^{\ast}
	=  (\ptm^{\ast})^{-1}
	&&
	\text{and}
	&&
	\ptm^{\ast}
	= (\mtp^{-1})^{\ast}
	=  (\mtp^{\ast})^{-1}.
\end{align}

We can now formulate the following bounded extremal problem.

\paragraph{Bounded Extremal Problem 3.} \label{BEP2}Let $\dualelement\in \lts$ and $\Ctwo>0$ be fixed. Find a $\psi$ in the domain $\textnormal{dom}(\ptm^{\ast})$ of the adjoint operator with $\|\ptm^*(\psi)\|\leq \Ctwo$ such that
\begin{equation}\label{eqn:BEP2}\tag{$BEP_{3}$}
	\begin{aligned}
		\| \psi- \dualelement\|=\inf\left\{\| g- \dualelement\|:\,g\in \textnormal{dom}(\ptm^{\ast}),\,\|\ptm^{\ast}(g)\|\leq \Ctwo\right\}.
	\end{aligned}
\end{equation}

Because $\ptm^{\ast}$ is weakly closed with dense domain and dense range, we can easily establish a parallel result to Corollary \ref{cor:BEP1_P1}, namely, that the minimizer of \eqref{eqn:BEP2} exists. This leads us to the following estimate on the weak convergence in Corollary \ref{cor:BEP1_P2}.

\begin{prop}
	Let the setup be as in Corollary \ref{cor:BEP1_P2} and let $\varphi_{n,\Cone}$ denote the minimizer of the corresponding \eqref{eqn:BEP1}. Furthermore, let $\psi_{\Ctwo}$ denote the minimizer of \eqref{eqn:BEP2} for a given $\Ctwo>0$ and and define $J_{\dualelement}(\Ctwo)=\|\dualelement-\psi_{\Ctwo}\|$. Then, $J_{\dualelement}(\Ctwo)$ decreases monotonically to zero as $\Ctwo \to \infty$, and for any $\dualelement\in \lts$ we have
	\begin{equation}
		\left|\langle \ptm(\varphi_{n,c}), \dualelement\rangle-\langle \ptm(\varphi), \dualelement\rangle\right|\leq 2 \min_{\Ctwo>0}\left\{\Cone J_{\dualelement}(\Ctwo) +\Ctwo \|f_n-\varphi\| \right\}.\label{eqn:lastres}
	\end{equation}
\end{prop}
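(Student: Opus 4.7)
The plan is to address the two assertions separately. The monotonic decrease of $J_{\dualelement}(\Ctwo)$ is immediate, since the feasible set $\{g \in \dom{\ptm^{\ast}} : \|\ptm^{\ast}(g)\| \leq \Ctwo\}$ of \eqref{eqn:BEP2} grows with $\Ctwo$ and so the minimum distance to $\dualelement$ is non-increasing. To show $J_{\dualelement}(\Ctwo) \to 0$, I would invoke the density of $\dom{\ptm^{\ast}}$ in $\lts$ (stated in the paragraph preceding \eqref{eqn:BEP2}): given $\varepsilon > 0$, pick $\tilde{\psi} \in \dom{\ptm^{\ast}}$ with $\|\tilde{\psi} - \dualelement\| < \varepsilon$; then $\tilde{\psi}$ is feasible for every $\Ctwo \geq \|\ptm^{\ast}(\tilde{\psi})\|$, so $J_{\dualelement}(\Ctwo) \leq \varepsilon$ for all sufficiently large $\Ctwo$.

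For the quantitative estimate \eqref{eqn:lastres}, the central trick is to insert $\pm \psi_{\Ctwo}$ into the second slot of the pairing:
\begin{align*}
\langle \ptm(\varphi_{n,\Cone}) - \ptm(\varphi), \dualelement \rangle = \langle \ptm(\varphi_{n,\Cone}) - \ptm(\varphi), \dualelement - \psi_{\Ctwo} \rangle + \langle \ptm(\varphi_{n,\Cone}) - \ptm(\varphi), \psi_{\Ctwo} \rangle.
\end{align*}
The first term I would bound by Cauchy--Schwarz together with the \eqref{eqn:BEP1}-constraint: since $\|\ptm(\varphi)\| \leq \Cone$ by the hypothesis of Corollary \ref{cor:BEP1_P2} and $\|\ptm(\varphi_{n,\Cone})\| \leq \Cone$ by feasibility of the minimizer, the triangle inequality gives $\|\ptm(\varphi_{n,\Cone}) - \ptm(\varphi)\| \leq 2\Cone$, hence this piece is at most $2\Cone J_{\dualelement}(\Ctwo)$. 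For the second term I would use that $\psi_{\Ctwo} \in \dom{\ptm^{\ast}}$ and $\varphi, \varphi_{n,\Cone} \in \uniqueplus = \dom{\ptm}$ to swap to the adjoint,
\begin{align*}
\langle \ptm(\varphi_{n,\Cone}) - \ptm(\varphi), \psi_{\Ctwo} \rangle = \langle \varphi_{n,\Cone} - \varphi, \ptm^{\ast}(\psi_{\Ctwo}) \rangle,
\end{align*}
and Cauchy--Schwarz together with $\|\ptm^{\ast}(\psi_{\Ctwo})\| \leq \Ctwo$ bound this piece by $\Ctwo \|\varphi_{n,\Cone} - \varphi\|$.

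To conclude, I would recall from the proof of Corollary \ref{cor:BEP1_P2} the quasi-optimality bound $\|\varphi_{n,\Cone} - \varphi\| \leq 2 \|f_n - \varphi\|$, which follows from $\varphi$ being feasible for \eqref{eqn:BEP1} with data $f_n$ under the assumption $\|\ptm(\varphi)\| \leq \Cone$. Combining the two pieces produces the bound $2\Cone J_{\dualelement}(\Ctwo) + 2\Ctwo \|f_n - \varphi\|$ valid for every $\Ctwo > 0$, and taking the infimum in $\Ctwo$ yields \eqref{eqn:lastres}. There is no substantial obstacle: the only subtlety is that the adjoint swap needs $\psi_{\Ctwo} \in \dom{\ptm^{\ast}}$, which is built into \eqref{eqn:BEP2}, together with $\varphi_{n,\Cone}, \varphi \in \dom{\ptm}$, which is built into \eqref{eqn:BEP1} and into the hypothesis of Corollary \ref{cor:BEP1_P2}.
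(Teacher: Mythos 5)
Your proposal is correct and follows essentially the same route as the paper: split the pairing by inserting $\pm\psi_{\Ctwo}$, bound the first term via Cauchy--Schwarz with $\|\ptm(\varphi_{n,\Cone})-\ptm(\varphi)\|\leq 2\Cone$, swap to the adjoint in the second term with $\|\ptm^{\ast}(\psi_{\Ctwo})\|\leq \Ctwo$ and the quasi-optimality bound $\|\varphi_{n,\Cone}-\varphi\|\leq 2\|f_n-\varphi\|$ from Corollary \ref{cor:BEP1_P2}, and deduce the decay of $J_{\dualelement}(\Ctwo)$ from the density of $\dom{\ptm^{\ast}}$. You are in fact slightly more explicit than the paper about the justification of each intermediate bound.
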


\begin{proof}
We can directly estimate
\begin{align}
	&\left|\langle \ptm(\varphi_{n,\Cone}), \dualelement\rangle-\langle \ptm(\varphi), \dualelement\rangle\right| \nonumber
	\\&=\left|\langle\nonumber \ptm(\varphi_{n,\Cone})- \ptm(\varphi), \dualelement\rangle\right|\\
	&\leq\left|\langle \ptm(\varphi_{n,\Cone})- \ptm(\varphi), \dualelement- \psi_t\rangle\right|+\left|\langle \ptm(\varphi_{n,\Cone})- \ptm(\varphi), \psi_t\rangle\right|\\ \nonumber
	&\leq\|\ptm(\varphi_{n,\Cone})- \ptm(\varphi)\|\cdot\|\dualelement- \psi_t\|+\left|\langle \varphi_{n,\Cone}- \varphi, \ptm^{\ast}(\psi_t)\rangle\right| \\ \nonumber
	&\leq 2\Cone J_{\dualelement}(\Ctwo) +2\|f_n-\varphi\| \Ctwo.
\end{align}
Since the above holds for any $\Ctwo>0$, we end up with \eqref{eqn:lastres}. The monotonic decrease of $J_{\dualelement}(\Ctwo)$ to zero is again a direct consequence of the density of the domain $\textnormal{dom}(\ptm^{\ast})$ in $\lts$.
\end{proof}

\end{document}